\newtheorem{remark}{Remark}[section] 
\newtheorem{example}{Example}[section] 
\title{Discretized fractional substantial calculus  \thanks{
This work was supported by the National Natural Science Foundation of China under
Grant  No. 11271173.}}
\author{Minghua Chen \thanks{ School of Mathematics and Statistics,
Lanzhou University, Lanzhou 730000, P. R. China. }
        \and Weihua Deng \thanks{Corresponding author (Email: dengwh@lzu.edu.cn). School of Mathematics and Statistics,
Lanzhou University, Lanzhou 730000, P. R. China.}}
\begin{document}

\maketitle

\begin{abstract}
This paper discusses the properties and the numerical discretizations of the fractional substantial integral
$$I_s^\nu f(x)=\frac{1}{\Gamma(\nu)} \int_{a}^x{\left(x-\tau\right)^{\nu-1}}e^{-\sigma(x-\tau)}{f(\tau)}d\tau,~~\nu>0, $$
and the fractional substantial derivative
$$D_s^\mu f(x)=D_s^m[I_s^\nu f(x)],~~~~\nu=m-\mu,$$
where $D_s=\frac{\partial}{\partial x}+\sigma=D+\sigma$, $\sigma$ can be a constant or a function without related to $x$, say $\sigma(y)$; and $m$ is the smallest integer that exceeds $\mu$. The Fourier transform method and fractional linear multistep method are used to analyze the properties or derive the discretized schemes. And the convergences of the presented discretized schemes with the global truncation error $\mathcal{O}(h^p)$ $ (p=1,2,3,4,5)$ are theoretically proved and numerically verified.
\end{abstract}

\begin{keywords}
fractional substantial calculus, fractional linear multistep methods,  fourier transform, stability and convergence
\end{keywords}

\begin{AMS}
26A33, 65L06, 42A38, 65M12
\end{AMS}

\pagestyle{myheadings}
\thispagestyle{plain}
\markboth{M. H. CHEN  AND W. H.  DENG }{DISCRETIZED FRACTIONAL SUBSTANTIAL CALCULUS}

\section{Introduction}
Anomalous diffusion processes are usually characterized by the nonlinear time dependance of the mean squared displacement, i.e., $\langle z^2(t)\rangle \sim t^\alpha$. When $0<\alpha<1$, it is called subdiffusion; $1<\alpha$ corresponds to superdiffusion, and $\alpha=1$ to normal diffusion.  A versatile framework for describing the anomalous diffusion is the continuous time random walks (CTRWs), which is governed by the waiting time probability density function (PDF) and jump length PDF. When the waiting time PDF and/or jump length PDF are power-law, and the two PDFs are independent, the transport equations can be derived, namely fractional Fokker-Planck and Klein-Kramers equations \cite{Metzler:00}. The time fractional Fokker-Planck equation can well characterize the subdiffusion, and the space fractional Fokker-Planck equation can depict the L\'{e}vy flight. The L\'{e}vy flight has a diverging mean squared displacement, and can just be applied to rather exotic physical processes \cite{Sokolov:03}.

L\'{e}vy walk gives another proper dynamical description for the superdiffusion (roughly speaking, now the particle has finite physical speed), and the PDFs of waiting time and jump length are spatiotemporal coupling \cite{Sokolov:03}. Friedrich and his co-workers discuss the CTRW model with position-velocity coupling PDF \cite{Friedrich:06}. Carmi and Barkai use the CTRW model with functional of path and position coupling PDF \cite{Carmi:11}. Based on the CTRW models with coupling PDFs, they all derive the deterministic equations; and mathematically an important operator, fractional substantial derivative, is introduced \cite{Carmi:11, Carmi:10, Friedrich:06, Sokolov:03, Turgeman:09}.

With the wide applications of the fractional substantial derivative, it seems to be urgent to mathematically analyze its properties and numerically provide its effective discretizations. This paper focuses on these two topics. The fractional substantial  derivative is defined by \cite{Carmi:11, Friedrich:06}
\begin{equation*}
D_s^\nu f(x)=\frac{1}{\Gamma(\nu)} \left[\frac{\partial}{\partial x}+\sigma\right]
\int_{0}^x{\left(x-\tau\right)^{\nu-1}}e^{-\sigma (x-\tau)}{f(\tau)}d\tau,~~0<\nu<1,
\end{equation*}
where $\sigma$ can be a constant or a function not related to $x$, say, $\sigma(y)$. In this paper, we extend the order of fractional substantial derivative $\nu\in (0,1)$ to $\nu>0$. First, we introduce the fractional substantial integral.
\begin{definition}\label{definition1.1}
Let $ \nu>0$, $f(x)$ be piecewise continuous on $(a,\infty)$ and integrable on any finite subinterval $[a,\infty)$; and let $\sigma$ be a constant or a function without related to $x$. Then the fractional substantial  integral of $f$ of order $\nu$ is defined as
\begin{equation}\label{1.1}
I_s^\nu f(x)=\frac{1}{\Gamma(\nu)}\int_{a}^x{\left(x-\tau\right)^{\nu-1}}e^{-\sigma(x-\tau)}{f(\tau)}d\tau, ~~~~x>a.
\end{equation}
\end{definition}

\begin{definition}\label{definition1.2}
Let $ \mu>0$,  $f(x)$ be (m-1)-times continuously differentiable on $(a,\infty)$ and its m-times derivative be integrable on any finite subinterval of  $[a,\infty)$, where $m$ is the smallest integer that exceeds $\mu$; and let $\sigma$ be a constant or a function without related to $x$. Then
the fractional substantial  derivative of $f$ of order $\mu$ is defined as
\begin{equation}\label{1.2}
D_s^\mu f(x)=D_s^m[I_s^{m-\mu} f(x)],
\end{equation}
where
\begin{equation}\label{1.3}
  D_s^m=\left(\frac{\partial}{\partial x}+\sigma\right)^m=(D+\sigma)^m=(D+\sigma)(D+\sigma)\cdots(D+\sigma).
\end{equation}
\end{definition}

When $\sigma=0$, obviously, the fractional substantial integral and derivative reduce to the Riemann-Liouville
fractional integral and derivative, respectively.

In the following, using Fourier transform methods and fractional linear multistep methods, respectively, we derive the $p$-th order ($p\leq 5$)  approximations of the $\alpha$-th fractional substantial derivative ($\alpha>0$) or
fractional substantial integral ($\alpha<0$) by the corresponding coefficients of the generating functions $\kappa ^{p,\alpha}(\zeta)$, with
\begin{equation}\label{1.4}
\kappa^{p,\alpha}(\zeta) = \left(\sum_{i=1}^p\frac{1}{i}\left(1- e^{-\sigma h} \zeta  \right)^i\right)^{\alpha},
\end{equation}
where $h$ is the uniform stepsize. We rewrite (\ref{1.4}) as a tabular, see Table \ref{table:01}.

\begin{table}[h]\fontsize{9.5pt}{12pt}\selectfont
 \begin{center}
  \caption {Generating functions of the coefficients for the $p$-th order approximation of $\alpha$-th fractional substantial derivative.} \vspace{5pt}
\begin{tabular*}{\linewidth}{@{\extracolsep{\fill}}*{3}{c|l}}                                  \hline  
$~~~p$& ~~~~~~~~~~~~~~~~~~~~~~~~~~~~~~~~~~~~~~~~~~~~~~$\kappa^{p,\alpha}(\zeta)$   \\\hline
~~~1&    $\left(1- e^{-\sigma h} \zeta \right)^{\alpha}$    \\
~~~2& $\left(3/2-2e^{-\sigma h} \zeta +1/2(e^{-\sigma h} \zeta )^2\right)^{\alpha}$  \\
~~~3& $\left(11/6-3e^{-\sigma h} \zeta +3/2(e^{-\sigma h} \zeta )^2-1/3(e^{-\sigma h} \zeta )^3\right)^{\alpha}$  \\
~~~4& $\left(25/12-4e^{-\sigma h} \zeta +3(e^{-\sigma h} \zeta )^2-4/3(e^{-\sigma h} \zeta )^3+1/4(e^{-\sigma h} \zeta )^4\right)^{\alpha}$  \\
~~~5& $\left(137/60-5e^{-\sigma h} \zeta +5(e^{-\sigma h} \zeta )^2-10/3(e^{-\sigma h} \zeta )^3+5/4(e^{-\sigma h} \zeta )^4
   -1/5(e^{-\sigma h} \zeta )^5\right)^{\alpha}$  \\ \hline
    \end{tabular*}\label{table:01}
  \end{center}
\end{table}

For $\sigma=0$, formula $(\ref{1.4})$
reduces to the fractional Lubich's methods \cite{Lubich:86}. For $\sigma=0, \alpha=1$, the scheme reduces to the classical $(p+1)$-point backward difference formula  \cite{Henrici:62}.

The  outline of this paper is as follows.
In Section 2, we give some properties of the fractional substantial  calculus.
In Sections 3 and 4, using  Fourier transform method and fractional linear multistep method, respectively, we derive the
 convergence of the discretized schemes of the fractional substantial calculus.
And  the convergence with the global truncation error $\mathcal{O}(h^p)$ $(p=1,2,3,4,5)$ are numerically verified in Section 5.
Finally, we conclude the paper with some remarks in the last section.

\section{Properties for the fractional substantial  calculus}

Let us now consider some properties of the fractional substantial  calculus.

\begin{lemma} \label {lemma2.1}
Let $f(x)$ be continuous on $[a,\infty)$, and $\nu >0$, then  for all $x\geq a$,
$$\lim _{\nu\rightarrow 0}I_s^\nu f(x)= f(x).$$
Hence we can put $I_s^0 f(x)= f(x).$
\end{lemma}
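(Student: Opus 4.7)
The plan is to reduce the claim to the classical Riemann--Liouville case by peeling off the exponential factor, and then to carry out a standard continuity-based $\varepsilon$--$\delta$ argument.

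First I would use the algebraic identity $e^{-\sigma(x-\tau)}=e^{-\sigma x}e^{\sigma\tau}$ (valid because, by hypothesis, $\sigma$ does not depend on $x$) to rewrite the defining formula (\ref{1.1}) as
\[
I_s^\nu f(x) \;=\; e^{-\sigma x}\, I^\nu[g](x),\qquad g(\tau):=e^{\sigma\tau}f(\tau),
\]
where $I^\nu$ denotes the ordinary Riemann--Liouville integral (the $\sigma=0$ specialization of $I_s^\nu$). Since the multiplier $e^{-\sigma x}$ is continuous in $x$ and $g$ inherits continuity on $[a,\infty)$ from $f$, the problem reduces to showing $\lim_{\nu\to 0^+}I^\nu g(x)=g(x)$ for each fixed $x>a$; multiplying the limit by $e^{-\sigma x}$ recovers $e^{-\sigma x}g(x)=f(x)$ as required.

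For the Riemann--Liouville limit I would start from the elementary identity
\[
\frac{1}{\Gamma(\nu)}\int_a^x (x-\tau)^{\nu-1}\,d\tau \;=\; \frac{(x-a)^\nu}{\Gamma(\nu+1)},
\]
which tends to $1$ as $\nu\to 0^+$ whenever $x>a$, and decompose
\[
I^\nu g(x)-g(x)\;=\;\frac{1}{\Gamma(\nu)}\int_a^x(x-\tau)^{\nu-1}\bigl[g(\tau)-g(x)\bigr]d\tau+\left(\frac{(x-a)^\nu}{\Gamma(\nu+1)}-1\right)g(x).
\]
The boundary term vanishes as $\nu\to 0^+$. To control the remaining integral I would, given $\varepsilon>0$, use continuity of $g$ at $x$ to pick $\delta\in(0,x-a)$ with $|g(\tau)-g(x)|<\varepsilon$ on $[x-\delta,x]$ and split the integral at $x-\delta$. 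The near piece is bounded by $\varepsilon\,\delta^\nu/\Gamma(\nu+1)\to\varepsilon$, and the far piece by $2M\bigl[(x-a)^\nu-\delta^\nu\bigr]/\Gamma(\nu+1)\to 0$, where $M=\sup_{[a,x]}|g|$, since both $(x-a)^\nu$ and $\delta^\nu$ converge to $1$.

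The only mild subtlety is the corner case $x=a$, where the defining integral is over an empty interval and hence equals $0$ for every $\nu>0$; the statement is therefore to be read pointwise for $x>a$, with the value at $x=a$ absorbed into the convention $I_s^0 f(x)=f(x)$. I do not anticipate any genuine obstacle beyond this bookkeeping: the exponential reduction turns the substantial case into the Riemann--Liouville case, after which the continuity-splitting argument above is entirely routine.
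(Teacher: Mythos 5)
Your proof is correct, but it follows a genuinely different route from the paper's. The paper first assumes the stronger hypothesis that $f$ is continuously differentiable, integrates (\ref{1.1}) by parts to obtain
$I_s^\nu f(x)=\frac{(x-a)^\nu e^{-\sigma(x-a)}f(a)}{\Gamma(\nu+1)}+\frac{1}{\Gamma(\nu+1)}\int_a^x(x-\tau)^\nu e^{-\sigma(x-\tau)}D_sf(\tau)\,d\tau$,
and passes to the limit $\nu\to 0$ termwise; for merely continuous $f$ it then defers to the argument in Podlubny. You instead conjugate by the exponential, writing $I_s^\nu f(x)=e^{-\sigma x}I^\nu[e^{\sigma\tau}f(\tau)](x)$ --- an identity the paper itself records later as (\ref{4.11}) but does not invoke here --- and then give a self-contained $\varepsilon$--$\delta$ splitting argument for the Riemann--Liouville limit. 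What your route buys is that it works under exactly the stated hypothesis (continuity only), with no appeal to an external reference and no preliminary smoothness assumption; what the paper's route buys is an explicit integrated formula that it reuses in Lemmas \ref{lemma2.5} and \ref{lemma2.6}. Your handling of the corner case $x=a$ (where the integral vanishes for every $\nu>0$, so the limit statement is really pointwise for $x>a$) is a point the paper glosses over, and flagging it is appropriate rather than a defect.
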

\begin{proof}
If $f(x)$ has continuous derivative for $x\geq a$, then using integration by parts to (\ref{1.1}), there exists
\begin{equation*}
\begin{split}
I_s^\nu f(x)&=-\frac{1}{\Gamma(\nu+1)}\int_{a}^x e^{-\sigma(x-\tau)}{f(\tau)}d\left(x-\tau\right)^{\nu} \\
            &=\frac{ \left(x-a\right)^{\nu} e^{-\sigma(x-a)}f(a)  }{\Gamma(\nu+1)}
              + \frac{1}{\Gamma(\nu+1)}\int_{a}^x  \left(x-\tau\right)^{\nu} e^{-\sigma(x-\tau)}{D_sf(\tau)}d\tau,
\end{split}
\end{equation*}
where $D_s$ is defined by (\ref{1.3}). So we get
\begin{equation*}
\begin{split}
 &\lim _{\nu\rightarrow 0}I_s^\nu f(x)\\
 &\quad = e^{-\sigma(x-a)}f(a) +\sigma \int_{a}^x   e^{-\sigma(x-\tau)}{f(\tau)}d\tau + \int_{a}^x   e^{-\sigma(x-\tau)}d{f(\tau)}=f(x).
\end{split}
\end{equation*}

If $f(x)$ is only continuous for $x\geq a$, the similar arguments can be performed as  \cite[p. 66-67]{Podlubny:99}, we omit it here.
\end{proof}

\begin{lemma} \label {lemma2.2}
Let $f(x)$ be continuous on $[a,\infty)$ and $\mu,\nu >0$, then  for all $x \geq a$,
$$I_s^\nu[I_s^\mu f(x)]=I_s^{\mu +\nu}f(x)=I_s^\mu[I_s^\nu f(x)].$$
\end{lemma}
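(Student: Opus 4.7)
The plan is to unfold both nested integrals using Definition \ref{definition1.1}, swap the order of integration by Fubini, and exploit the telescoping property of the exponential weight to reduce the inner integral to a Beta function, exactly as one does for the classical Riemann--Liouville semigroup law.

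First I would write
\begin{equation*}
I_s^\nu[I_s^\mu f(x)] = \frac{1}{\Gamma(\nu)\Gamma(\mu)}\int_a^x (x-\tau)^{\nu-1} e^{-\sigma(x-\tau)}\int_a^\tau (\tau-s)^{\mu-1} e^{-\sigma(\tau-s)} f(s)\, ds\, d\tau.
\end{equation*}
Since $f$ is continuous on $[a,\infty)$, it is bounded on the compact triangle $\{(s,\tau) : a\le s\le \tau\le x\}$, and the weak singularities $(x-\tau)^{\nu-1}$ and $(\tau-s)^{\mu-1}$ are integrable there, so Fubini's theorem applies and I can interchange the order of integration. The crucial observation is that the two exponential factors telescope, $e^{-\sigma(x-\tau)}e^{-\sigma(\tau-s)} = e^{-\sigma(x-s)}$, so the dependence on $\tau$ in the exponential disappears and I obtain
\begin{equation*}
I_s^\nu[I_s^\mu f(x)] = \frac{1}{\Gamma(\nu)\Gamma(\mu)}\int_a^x e^{-\sigma(x-s)} f(s)\left(\int_s^x (x-\tau)^{\nu-1}(\tau-s)^{\mu-1}\, d\tau\right) ds.
\end{equation*}

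Next I would make the affine substitution $\tau = s + t(x-s)$, $t\in[0,1]$, which turns the inner integral into the Beta integral
\begin{equation*}
(x-s)^{\mu+\nu-1}\int_0^1 (1-t)^{\nu-1} t^{\mu-1}\, dt = (x-s)^{\mu+\nu-1}\, B(\mu,\nu) = (x-s)^{\mu+\nu-1}\frac{\Gamma(\mu)\Gamma(\nu)}{\Gamma(\mu+\nu)}.
\end{equation*}
Substituting back cancels the Gamma factors and produces exactly
\begin{equation*}
\frac{1}{\Gamma(\mu+\nu)}\int_a^x (x-s)^{\mu+\nu-1} e^{-\sigma(x-s)} f(s)\, ds = I_s^{\mu+\nu} f(x),
\end{equation*}
which is the middle equality. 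The last equality $I_s^{\mu+\nu} f(x) = I_s^\mu[I_s^\nu f(x)]$ follows by the symmetric role of $\mu$ and $\nu$ in the argument above (or simply by interchanging the labels).

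The only delicate point is the rigorous justification of the Fubini step; once that is secured, everything else is a clean calculation. In fact, the exponential telescoping is what makes the substantial case essentially identical to the Riemann--Liouville case, so there is no genuinely new obstacle beyond keeping track of the $e^{-\sigma(x-s)}$ factor that rides along unchanged through the argument.
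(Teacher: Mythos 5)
Your proposal is correct and follows essentially the same route as the paper's proof: unfold the nested integrals, interchange the order of integration, use the telescoping of the exponential weights $e^{-\sigma(x-\tau)}e^{-\sigma(\tau-s)}=e^{-\sigma(x-s)}$, and evaluate the inner integral as a Beta function equal to $\frac{\Gamma(\mu)\Gamma(\nu)}{\Gamma(\mu+\nu)}(x-s)^{\mu+\nu-1}$. Your explicit justification of the Fubini step and the affine substitution are welcome details that the paper leaves implicit.
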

\begin{proof}
\begin{equation*}
\begin{split}
     I_s^\nu[I_s^\mu f(x)]&=\frac{1}{\Gamma(\nu)}\int_{a}^x{\left(x-\tau\right)^{\nu-1}}e^{-\sigma(x-\tau)}{[I_s^\mu f(\tau)]}d\tau       \\
                          &=\frac{1}{\Gamma(\mu)\Gamma(\nu)}\int_{a}^x{\left(x-\tau\right)^{\nu-1}}e^{-\sigma(x-\tau)}d\tau
                            \int_{a}^\tau{\left(\tau-\xi\right)^{\mu-1}}e^{-\sigma(\tau-\xi)}f(\xi) d\xi    \\
                          &=\frac{1}{\Gamma(\mu)\Gamma(\nu)}\int_{a}^x e^{-\sigma(x-\xi)}f(\xi) d\xi
                            \int_{\xi}^x \left(x-\tau \right)^{\nu-1} \left(\tau-\xi \right)^{\mu-1} d\tau   \\
                          &=I_s^{\mu +\nu}f(x),
\end{split}
\end{equation*}
where the integral
$$\int_{\xi}^x \left(x-\tau \right)^{\nu-1} \left(\tau-\xi \right)^{\mu-1} d\tau =\frac{\Gamma(\mu)\Gamma(\nu)}{\Gamma(\mu+\nu)}(x-\xi)^{\mu+\nu-1}.$$
\end{proof}

\begin{lemma}\label {lemma2.3}
Let $f(x)$ be (m-1)-times continuously differentiable on $(a,\infty)$ and its m-times derivative be integrable on any finite subinterval of  $[a,\infty)$ and $\nu>0$, where $m$ is the smallest integer that exceeds $\nu$. Then for all  $x\geq a$,
$$D_s^\nu[I_s^\nu f(x)]=f(x).$$
\end{lemma}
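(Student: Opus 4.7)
The plan is to collapse the identity to the integer case by combining Definition~\ref{definition1.2} with the semigroup law from Lemma~\ref{lemma2.2}. First I would unfold the definition of $D_s^\nu$ to write
\begin{equation*}
  D_s^\nu[I_s^\nu f(x)] \;=\; D_s^m\bigl[I_s^{m-\nu}[I_s^\nu f(x)]\bigr].
\end{equation*}
Since the regularity hypothesis on $f$ makes it at least continuous on $(a,\infty)$, Lemma~\ref{lemma2.2} applies to the inner composition and produces $I_s^{m-\nu}[I_s^\nu f(x)] = I_s^m f(x)$. So the task reduces to proving the integer-order identity $D_s^m[I_s^m f(x)] = f(x)$.

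For the integer case I would start from the base identity $D_s[I_s^1 g(x)] = g(x)$ for continuous $g$. Differentiating the explicit formula $I_s^1 g(x) = \int_a^x e^{-\sigma(x-\tau)} g(\tau)\,d\tau$ by Leibniz' rule gives $\tfrac{\partial}{\partial x} I_s^1 g(x) = g(x) - \sigma I_s^1 g(x)$, so that $D_s I_s^1 g(x) = (\partial_x+\sigma) I_s^1 g(x) = g(x)$. Now using Lemma~\ref{lemma2.2} to peel off one factor at a time, $I_s^{m-k+1} f = I_s^1[I_s^{m-k} f]$, a quick induction on $k$ yields $D_s^k[I_s^m f(x)] = I_s^{m-k} f(x)$; taking $k=m$ and invoking Lemma~\ref{lemma2.1} (so that $I_s^0 f(x) = f(x)$) concludes the argument.

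The main step to be careful about, and the only real obstacle, is verifying that enough regularity persists through the induction for Leibniz' rule to be reapplied at each stage: after $k$ substantial derivatives have acted, one needs $I_s^{m-k} f$ to be continuous (so that the subsequent $D_s$ makes sense and produces $I_s^{m-k-1} f$). This is precisely what the hypothesis that $f$ be $(m-1)$-times continuously differentiable with integrable $m$-th derivative provides, since each application of $I_s^1$ gains one degree of smoothness. Once this regularity bookkeeping is done, the proof is essentially mechanical: the exponential weight $e^{-\sigma(x-\tau)}$ commutes cleanly with the structure $D_s = \partial_x + \sigma$, so no new cancellations beyond the classical Riemann--Liouville case need to be arranged.
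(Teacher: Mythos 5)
Your proposal is correct and follows essentially the same route as the paper: reduce the fractional case to the integer case via Definition~\ref{definition1.2} and the semigroup law of Lemma~\ref{lemma2.2}, then dispose of $D_s^m[I_s^m f]=f$ by repeatedly applying $D_s$ to peel the kernel down to $I_s^1 f$ and finishing with $D_s[I_s^1 f]=f$. The paper compresses the integer-order step into a single displayed computation where you spell it out as an explicit induction, but the underlying argument is identical.
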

\begin{proof}
Let us first consider the case of integer $\nu=m\geq 1:$
\begin{equation*}
  \begin{split}
D_s^m[I_s^m f(x)]&=D_s^m \left[\frac{1}{(m-1)!}\int_{a}^x{\left(x-\tau\right)^{m-1}}e^{-\sigma(x-\tau)}{f(\tau)}d\tau \right]\\
                 &=D_s  \int_{a}^xe^{-\sigma(x-\tau)}{f(\tau)}d\tau =D_s [I_s f(x)]=f(x).\\
  \end{split}
\end{equation*}
For $m-1 < \nu <m$, from Lemma \ref{lemma2.2}, there exists
$$I_s^m=I_s^{m-\nu}[I_s^\nu f(x)].$$
Thus, using  (\ref{1.2}) and above equation, we obtain
$$D_s^\nu[I_s^\nu f(x)]=D_s^m \{I_s^{m-\nu}[I_s^\nu f(x)] \}=D_s^m[I_s^m f(x)]=f(x).$$

\end{proof}

\begin{lemma}\label {lemma2.4}
Let $f(x)$ be (r-1)-times continuously differentiable on $(a,\infty)$ and its r-times derivative be integrable on any finite subinterval of  $[a,\infty)$, where $r=\max (m,n)$, $m$ and $n$ are positive integers. Denoting that
$$m-\nu=n-\mu, ~~\mu>0, \nu>0,$$
then  for all $x\geq a$,
$$D_s^n[I_s^\mu f(x)]=D_s^m[I_s^\nu f(x)].$$
\end{lemma}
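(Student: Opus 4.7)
The plan is to reduce the claim to an application of the semigroup property of the fractional substantial integral (Lemma \ref{lemma2.2}) together with the integer-order inversion $D_s^k[I_s^k g]=g$ established inside the proof of Lemma \ref{lemma2.3}. The key observation is that the relation $m-\nu=n-\mu$ can be rewritten as $\nu-\mu=m-n$, so the difference of the two fractional orders is exactly the difference of the two integer orders.

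First I would dispose of the trivial case $m=n$, which forces $\mu=\nu$, and then argue by symmetry that we may assume $m>n$, so that $k:=m-n=\nu-\mu$ is a positive integer. Applying Lemma \ref{lemma2.2} with exponents $k$ and $\mu$ gives
\begin{equation*}
I_s^{\nu}f(x)=I_s^{k+\mu}f(x)=I_s^{k}\bigl[I_s^{\mu}f(x)\bigr].
\end{equation*}
Since $D_s=D+\sigma$ is an ordinary first-order linear operator, the integer powers $D_s^{n}$ and $D_s^{k}$ commute and compose as $D_s^{m}=D_s^{n}\circ D_s^{k}$, so that
\begin{equation*}
D_s^{m}\bigl[I_s^{\nu}f(x)\bigr]=D_s^{n}\bigl\{D_s^{k}\bigl[I_s^{k}\bigl(I_s^{\mu}f(x)\bigr)\bigr]\bigr\}.
\end{equation*}
The integer-order inversion $D_s^{k}[I_s^{k}g(x)]=g(x)$, taken from the first display in the proof of Lemma \ref{lemma2.3}, collapses the inner bracket with $g=I_s^{\mu}f$, and delivers the desired identity $D_s^{m}[I_s^{\nu}f(x)]=D_s^{n}[I_s^{\mu}f(x)]$.

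The only point requiring care is checking that the regularity of $f$ propagates to $I_s^{\mu}f$ so that the integer-order inversion is legitimate. Here the hypothesis $r=\max(m,n)$ is essential: since $f^{(r-1)}$ is continuous and $f^{(r)}$ is locally integrable, repeated integration by parts in (\ref{1.1}) (exactly as in the proof of Lemma \ref{lemma2.1}) shows that $I_s^{\mu}f$ inherits enough differentiability for $D_s^{k}[I_s^{k}(I_s^{\mu}f)]=I_s^{\mu}f$ to hold pointwise for $x\geq a$. This bookkeeping of smoothness is the main, though essentially routine, obstacle; the algebraic core of the lemma is just the interplay between the semigroup law of Lemma \ref{lemma2.2} and the integer-order annihilation identity from Lemma \ref{lemma2.3}.
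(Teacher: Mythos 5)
Your argument is correct and is essentially the paper's own proof: after disposing of the trivial case $m=n$, both reduce the claim to the semigroup law $I_s^{\nu}=I_s^{k}[I_s^{\mu}\cdot]$ from Lemma \ref{lemma2.2} combined with the integer-order cancellation $D_s^{k}[I_s^{k}g]=g$ from Lemma \ref{lemma2.3}, and then compose with the remaining integer power of $D_s$ (the paper labels the larger integer $n$ and writes the cancellation first before applying $D_s^{m}$ to both sides, but this is the same computation up to relabeling). Your extra remark on how the hypothesis $r=\max(m,n)$ guarantees enough regularity for the inversion step is a reasonable addition that the paper leaves implicit.
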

\begin{proof}
If $m=n$, the lemma is trivial. Supposing that $n>m$ and $\gamma=n-m>0$, it yields
$\mu=\nu+\gamma>0$. Then according to Lemmas \ref {lemma2.2} and \ref {lemma2.3}, we obtain
$$D_s^\gamma[I_s^{\nu+\gamma} f(x)]=D_s^\gamma[I_s^{\gamma} I_s^\nu f(x)]=I_s^\nu f(x).$$
Letting $D_s^{m}$ perform on both sides of the above equation leads to
$$D_s^{m+\gamma}[I_s^{\nu+\gamma} f(x)]=D_s^m[I_s^\nu f(x)],$$
that is
$$D_s^n[I_s^\mu f(x)]=D_s^m[I_s^\nu f(x)].$$

\end{proof}

\begin{lemma} \label {lemma2.5}
Let $f(x)$ be continuously differentiable on $[a,\infty)$, and $\nu >0$. Then  for all $x\geq a$,
\begin{equation}\label{2.1}
 I_s^{\nu+1}[D_sf(x)]=I_s^\nu f(x)-\frac{f(a)}{\Gamma(\nu+1)}(x-a)^\nu e^{-\sigma (x-a)};
 \end{equation}
 and
 \begin{equation}\label{2.2}
 D_s[I_s^\nu f(x)]=I_s^\nu [D_s f(x)]+\frac{f(a)}{\Gamma(\nu)}(x-a)^{\nu-1} e^{-\sigma (x-a)}.
\end{equation}
\end{lemma}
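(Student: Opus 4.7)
The plan is to establish the two identities in order, exploiting the key cancellation $e^{-\sigma(x-\tau)}\,D_sf(\tau) = \frac{d}{d\tau}\bigl[e^{-\sigma(x-\tau)} f(\tau)\bigr]$, which rewrites the substantial derivative as a perfect $\tau$-derivative, readily handled by integration by parts.

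For (\ref{2.1}), I would start from the definition
$$I_s^{\nu+1}[D_sf(x)] = \frac{1}{\Gamma(\nu+1)}\int_a^x (x-\tau)^\nu\, e^{-\sigma(x-\tau)}\bigl[f'(\tau)+\sigma f(\tau)\bigr]\,d\tau,$$
replace the bracketed factor by the total $\tau$-derivative above, and integrate by parts against $(x-\tau)^\nu/\Gamma(\nu+1)$. Since $\nu>0$, the boundary contribution at $\tau=x$ vanishes; the boundary term at $\tau=a$ produces exactly $-\frac{f(a)}{\Gamma(\nu+1)}(x-a)^\nu e^{-\sigma(x-a)}$; and the remaining integral collapses to $I_s^\nu f(x)$ after using $\nu/\Gamma(\nu+1) = 1/\Gamma(\nu)$. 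A simple rearrangement then gives (\ref{2.1}).

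For (\ref{2.2}), the cleanest route is to apply $D_s$ to both sides of (\ref{2.1}) already established. This relies on the auxiliary fact $D_s\bigl[I_s^{\nu+1} g(x)\bigr] = I_s^\nu g(x)$ for continuous $g$, which I would obtain by differentiation under the integral via $\frac{d}{dx}\int_a^x F(x,\tau)\,d\tau = F(x,x) + \int_a^x \partial_x F\,d\tau$: the boundary piece $F(x,x)$ vanishes because $\nu>0$, and the $+\sigma I_s^{\nu+1}g$ term exactly cancels the $-\sigma(x-\tau)^\nu e^{-\sigma(x-\tau)}$ contribution produced by $\partial_x$, leaving $I_s^\nu g$. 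Applied with $g=D_sf$ this reduces the left side of the differentiated (\ref{2.1}) to $I_s^\nu[D_s f]$; on the right side one only needs the elementary calculation $D_s\bigl[(x-a)^\nu e^{-\sigma(x-a)}\bigr] = \nu(x-a)^{\nu-1} e^{-\sigma(x-a)}$, after which another use of $\nu/\Gamma(\nu+1) = 1/\Gamma(\nu)$ delivers (\ref{2.2}).

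The main obstacle is not deep but technical: when $0<\nu<1$ the kernel $(x-\tau)^{\nu-1}$ that surfaces after integration by parts (and after $\partial_x$ in the auxiliary lemma) is singular at $\tau=x$, so the differentiation under the integral sign and the vanishing of the boundary term $(x-\tau)^\nu e^{-\sigma(x-\tau)} f(\tau)\big|_{\tau=x}$ must be justified. Both points follow from $\nu>0$ together with the continuous differentiability of $f$ assumed in the hypothesis, so a single uniform argument covers all $\nu>0$ without splitting into integer and non-integer cases.
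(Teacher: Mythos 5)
Your proposal is correct and follows essentially the same route as the paper: the same integration by parts (read in the opposite direction, starting from $I_s^{\nu+1}[D_sf]$ rather than from $I_s^\nu f$) yields (\ref{2.1}), and (\ref{2.2}) is then obtained by applying $D_s$ to both sides of (\ref{2.1}) together with the computation $D_s\bigl[(x-a)^\nu e^{-\sigma(x-a)}\bigr]=\nu(x-a)^{\nu-1}e^{-\sigma(x-a)}$. The only difference is that you explicitly isolate and justify the auxiliary identity $D_s[I_s^{\nu+1}g]=I_s^\nu g$ (which the paper uses silently), a welcome extra bit of care.
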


\begin{proof}
Using integration by parts, it is easy to get
$$I_s^\nu f(x)=\frac{f(a)}{\Gamma(\nu+1)}(x-a)^\nu e^{-\sigma (x-a)}
+\frac{1}{\Gamma(\nu+1)}\int_{a}^x{\left(x-\tau\right)^{\nu}}e^{-\sigma(x-\tau)}[D_s{f(\tau)}]d\tau$$
where $D_s$ is defined by (\ref{1.3}).  Thus we obtain (\ref{2.1}).

Next we prove (\ref{2.2}). From (\ref{2.1}), it leads to
 \begin{equation*}
 \begin{split}
  &D_s[I_s^\nu f(x)]\\
  &\quad =D_s \left \{ I_s^{\nu+1}[D_sf(x)]+\frac{f(a)}{\Gamma(\nu+1)}(x-a)^\nu e^{-\sigma (x-a)}\right \}\\
  &\quad=I_s^\nu [D_s f(x)]+\frac{f(a)}{\Gamma(\nu+1)} (D+\sigma)\left[(x-a)^\nu e^{-\sigma (x-a)}\right]\\
  &\quad=I_s^\nu [D_s f(x)]+\frac{f(a)}{\Gamma(\nu)}(x-a)^{\nu-1} e^{-\sigma (x-a)}.
 \end{split}
 \end{equation*}
Hence, we get (\ref{2.2}).
\end{proof}

\begin{lemma} \label {lemma2.6}
Let $f(x)$ be (m-1)-times continuously differentiable on $(a,\infty)$ and its m-times derivative be integrable on any finite subinterval of  $[a,\infty)$, $\mu>0$, $\nu>0$; and  $m$ is the smallest integer that exceeds $\mu$.  Then  for all $x\geq a$,
\begin{equation}\label{2.3}
  I_s^{\nu}f(x)=I_s^{m+\nu}[D_s^m f(x)]+\sum _{k=0}^{m-1}\frac{D_s^{k}f({a})(x-a)^{k+\nu}e^{-\sigma (x-a)}}{\Gamma{(k+\nu+1)}};
\end{equation}
and
\begin{equation}\label{2.4}
\begin{split}
   D_s^\mu f(x)&=I_s^{m-\mu} [D_s^m f(x)]+\sum _{k=0}^{m-1}\frac{D_s^{k}f({a})(x-a)^{k-\mu}e^{-\sigma (x-a)}}{\Gamma{(k-\mu+1)}}\\
   &={^C\!D}_s^\mu f(x) +\sum _{k=0}^{m-1}\frac{D_s^{k}f({a})(x-a)^{k-\mu}e^{-\sigma (x-a)}}{\Gamma{(k-\mu+1)}},
\end{split}
\end{equation}
where ${^C\!D}_s^\mu f(x)=I_s^{m-\mu} [D_s^m f(x)]$ can be similarly called Caputo fractional substantial derivative \cite{Podlubny:99}.
In particular, from (\ref{2.3}) and (\ref{2.4}), we can extend the definitions of $I_s^{\nu}$ and $D_s^{\mu}$, i.e., $\mu, \nu$ can belong to $\mathbb{R}$ instead of being limited to $\mathbb{R}^+$, then for any real $\alpha$, there exists
\begin{equation}\label{2.5}
 I_s^\alpha=D_s^{-\alpha}.
\end{equation}
\end{lemma}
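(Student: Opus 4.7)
The plan is to prove (2.3) by a finite induction on $m$ built on the ``integration-by-parts'' identity (2.1) of Lemma 2.5, then derive (2.4) by applying $D_s^m$ to (2.3) with $\nu$ replaced by $m-\mu$, and finally to observe that (2.5) is a direct consistency statement about extending the two formulas to real exponents.

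For (2.3), the base case $j=1$ is (2.1) rearranged. For the inductive step, assume the identity
$$ I_s^\nu f(x) = I_s^{j+\nu}[D_s^j f(x)] + \sum_{k=0}^{j-1} \frac{D_s^k f(a)(x-a)^{k+\nu} e^{-\sigma(x-a)}}{\Gamma(k+\nu+1)}. $$
I would apply (2.1) to the function $D_s^j f$ with $\nu$ replaced by $j+\nu$, obtaining
$$ I_s^{j+\nu}[D_s^j f(x)] = I_s^{j+\nu+1}[D_s^{j+1} f(x)] + \frac{D_s^j f(a)}{\Gamma(j+\nu+1)}(x-a)^{j+\nu} e^{-\sigma(x-a)}, $$
and substitute this back; the boundary sum absorbs the new term and now runs up to $k=j$, advancing the induction to $j+1$. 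Iterating up to $j=m$ yields (2.3).

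For (2.4), Definition 1.2 gives $D_s^\mu f = D_s^m[I_s^{m-\mu} f]$. I would apply (2.3) with $\nu$ replaced by $m-\mu$ and then act by $D_s^m$ on both sides. Two ingredients are required: (i) $D_s^m I_s^{2m-\mu}[D_s^m f] = I_s^{m-\mu}[D_s^m f]$, which follows from $I_s^{2m-\mu} = I_s^m I_s^{m-\mu}$ (Lemma 2.2) together with the integer case $D_s^m I_s^m g = g$ (Lemma 2.3); and (ii) the elementary one-line product-rule identity
$$ D_s\!\left[g(x)\,e^{-\sigma(x-a)}\right] = g'(x)\,e^{-\sigma(x-a)}, $$
which iterates to $D_s^m[g(x)e^{-\sigma(x-a)}] = g^{(m)}(x)e^{-\sigma(x-a)}$. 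Applying (ii) with $g(x) = (x-a)^{k+m-\mu}$ and using $\frac{d^m}{dx^m}(x-a)^{k+m-\mu} = \frac{\Gamma(k+m-\mu+1)}{\Gamma(k-\mu+1)}(x-a)^{k-\mu}$, the Gamma factors cancel and produce exactly the boundary sum in (2.4); identifying $I_s^{m-\mu}[D_s^m f]$ with ${}^{C}\!D_s^\mu f$ yields the second line of (2.4). The extension (2.5) is then immediate: both (2.3) and (2.4) retain meaning when the exponent is any real number, and matching (2.4) at vanishing boundary data against Definition 1.1 gives $I_s^{\alpha} = D_s^{-\alpha}$ for every real $\alpha$.

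The main obstacle, modest as it is, is really the ``telescoping'' identity in (ii): the fact that $D_s = D + \sigma$ differentiates transparently through the factor $e^{-\sigma(x-a)}$ is what allows $D_s^m$ to turn each boundary term into a clean monomial times exponential, making the Gamma ratios collapse to the prescribed form. Everything else is a careful bookkeeping application of Lemmas 2.2, 2.3, and 2.5.
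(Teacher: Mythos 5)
Your proof of (\ref{2.3}) is exactly the paper's: iterate identity (\ref{2.1}) of Lemma \ref{lemma2.5} (applied to $D_s^j f$ with $\nu$ shifted to $j+\nu$) and telescope the boundary terms. For (\ref{2.4}), however, you take a genuinely different, and arguably cleaner, route. The paper runs a \emph{second} induction: it repeatedly applies identity (\ref{2.2}) of Lemma \ref{lemma2.5} to build the commutation formula $D_s^m[I_s^\nu f]=I_s^\nu[D_s^m f]+\sum_{k=0}^{m-1}\frac{D_s^k f(a)(x-a)^{\nu+k-m}e^{-\sigma(x-a)}}{\Gamma(\nu+k-m+1)}$ (its Eq.\ (\ref{2.6})) and then sets $\nu=m-\mu$. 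You instead specialize the already-proven (\ref{2.3}) at $\nu=m-\mu$ and hit both sides with $D_s^m$, using the semigroup property $I_s^{2m-\mu}=I_s^m I_s^{m-\mu}$ (Lemma \ref{lemma2.2}) together with the integer inverse $D_s^m I_s^m=\mathrm{id}$ (Lemma \ref{lemma2.3}) for the main term, and the transparent differentiation $D_s\bigl[g(x)e^{-\sigma(x-a)}\bigr]=g'(x)e^{-\sigma(x-a)}$ for the boundary terms; the Gamma ratios then collapse correctly to $\Gamma(k+m-\mu+1)/\Gamma(k-\mu+1)$, reproducing the sum in (\ref{2.4}). Both arguments are valid and both ultimately rest on Lemma \ref{lemma2.5}; yours avoids the second induction and makes explicit the key structural fact that $D_s$ commutes through the weight $e^{-\sigma(x-a)}$ (which the paper only uses implicitly, via its formula (\ref{2.9}) elsewhere), while the paper's version yields the more general commutation identity (\ref{2.6}) as a by-product. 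The treatment of (\ref{2.5}) as an immediate consistency remark matches the paper, which likewise offers no further argument there.
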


\begin{proof}
Replacing $\nu$ by $\nu+1$ and $f$ by $D_s f$ in (\ref{2.1}), we obtain
$$I_s^{\nu+1}[D_sf(x)]=I_s^{\nu+2}[D_s^2 f(x)]+\frac{D_sf(a)}{\Gamma(\nu+2)}(x-a)^{\nu+1} e^{-\sigma (x-a)}.$$
Thus, according to the above equation and (\ref{2.1}), there exists
\begin{equation*}
  \begin{split}
    I_s^\nu f(x)& =I_s^{\nu+1}[D_sf(x)]+\frac{f(a)}{\Gamma(\nu+1)}(x-a)^\nu e^{-\sigma (x-a)} \\
                & =I_s^{\nu+2}[D_s^2 f(x)]+\frac{D_sf(a)}{\Gamma(\nu+2)}(x-a)^{\nu+1} e^{-\sigma (x-a)}+\frac{f(a)}{\Gamma(\nu+1)}(x-a)^\nu e^{-\sigma (x-a)} \\
                &=I_s^{(\nu+m)}[D_s^m f(x)]+\sum _{k=0}^{m-1}\frac{D_s^{k}f({a})(x-a)^{\nu+k}e^{-\sigma (x-a)}}{\Gamma{(\nu+k+1)}}.
  \end{split}
\end{equation*}

To prove (\ref{2.4}), letting $D_s$ perform on both sides of (\ref{2.2}) leads to
 $$D^2_s[I_s^\nu f(x)]=D_s \{I_s^\nu [D_s f(x)]\}+\frac{f(a)}{\Gamma(\nu-1)}(x-a)^{\nu-2} e^{-\sigma (x-a)},$$
and replacing $f$ with $D_sf$ in (\ref{2.2}) yields
$$ D_s \{I_s^\nu [D_s f(x)]\}=I_s^\nu [D_s^2 f(x)]+\frac{D_sf(a)}{\Gamma(\nu)}(x-a)^{\nu-1} e^{-\sigma (x-a)}.$$
Therefore, there exists
\begin{equation*}
  \begin{split}
D^2_s[I_s^\nu f(x)]\!=I_s^\nu [D_s^2 f(x)]\!+\!\frac{D_sf(a)}{\Gamma(\nu)}(x-a)^{\nu-1} e^{-\sigma (x-a)}\! +\!\frac{f(a)}{\Gamma(\nu-1)}(x-a)^{\nu-2} e^{-\sigma (x-a)}.
  \end{split}
    \end{equation*}
Repeating the procedure $m-1$ times results in
\begin{equation}\label{2.6}
  D_s^m[I_s^\nu f(x)]=I_s^\nu [D_s^m f(x)]+\sum _{k=0}^{m-1}\frac{D_s^{k}f({a})(x-a)^{\nu+k-m}e^{-\sigma (x-a)}}{\Gamma{(\nu+k-m+1)}}.
\end{equation}
Taking $\nu=m-\mu$, then Eq. (\ref{2.6}) can be rewritten as
 $$D_s^\mu f(x)=D_s^m[I_s^\nu f(x)]=I_s^{m-\mu} [D_s^m f(x)]+\sum _{k=0}^{m-1}\frac{D_s^{k}f({a})(x-a)^{k-\mu}e^{-\sigma (x-a)}}{\Gamma{(k-\mu+1)}}.$$
From (\ref{2.3}) and (\ref{2.4}), it yields that $I_s^\alpha=D_s^{-\alpha}$ for any real $\alpha$.
\end{proof}

\begin{lemma}\label {lemma2.7}
Let $f(x)$ be (m-1)-times continuously differentiable on $(a,\infty)$ and its m-times derivative be integrable on any finite subinterval of  $[a,\infty)$ and $\nu>0$, where $m$ is the smallest integer that exceeds $\nu$. Then for all  $x>a$,
$$I_s^\nu[D_s^\nu f(x)]=f(x)-\sum_{j=1}^m[D_s^{\nu-j}f(x)]_{x=a}\frac{ \left(x-a\right)^{\nu-j} e^{-\sigma(x-a)}  }{\Gamma(\nu-j+1)}.$$
\end{lemma}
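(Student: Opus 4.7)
The plan is to reduce the identity to the already-established machinery of Lemmas \ref{lemma2.2}, \ref{lemma2.3}, and \ref{lemma2.6}. The natural starting point is Definition \ref{definition1.2}, which rewrites $D_s^\nu f(x) = D_s^m[I_s^{m-\nu} f(x)]$; setting $g(x) := I_s^{m-\nu}f(x)$ turns the left-hand side into $I_s^\nu[D_s^m g(x)]$. Thus the task reduces to commuting $I_s^\nu$ past $D_s^m$ on $g$.

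To effect that commutation I would invoke equation (\ref{2.6}) from the proof of Lemma \ref{lemma2.6}, applied to $g$ in place of $f$ with the same $\nu$ and $m$. Rearranging it gives
\begin{equation*}
I_s^\nu[D_s^m g(x)] = D_s^m[I_s^\nu g(x)] - \sum_{k=0}^{m-1}\frac{D_s^{k}g(a)\,(x-a)^{\nu+k-m}e^{-\sigma(x-a)}}{\Gamma(\nu+k-m+1)}.
\end{equation*}
Two simplifications are then immediate. First, by Lemma \ref{lemma2.2}, $I_s^\nu g = I_s^\nu I_s^{m-\nu}f = I_s^m f$, and by Lemma \ref{lemma2.3}, $D_s^m[I_s^m f(x)] = f(x)$; this yields the desired $f(x)$ term. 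Second, the boundary coefficients must be rewritten in terms of $f$: using Lemma \ref{lemma2.4} (together with the extension $I_s^\alpha = D_s^{-\alpha}$ in equation (\ref{2.5})) one has $D_s^{k}g = D_s^{k}[I_s^{m-\nu}f] = D_s^{\nu-(m-k)}f$ for $k=0,1,\ldots,m-1$. After substituting, the reindexing $j = m-k$ converts $\sum_{k=0}^{m-1}$ into $\sum_{j=1}^{m}$ and turns the exponents and gamma arguments into $\nu-j$, producing exactly the stated formula.

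The main obstacle is the careful verification that $D_s^{k}[I_s^{m-\nu}f] = D_s^{\nu-(m-k)}f$ for all $k$ in the range, particularly when $\nu-(m-k)$ is negative or zero: the identity must be justified in the extended sense established at the end of Lemma \ref{lemma2.6} rather than directly from Definition \ref{definition1.2}. Once that point is granted, the rest is bookkeeping, and one need only remark (as in the proof of Lemma \ref{lemma2.1}) that the result extends from continuously differentiable $f$ to $f$ satisfying the stated regularity by a limiting argument, since all intermediate identities are valid under the weaker hypotheses recorded in Lemmas \ref{lemma2.2}--\ref{lemma2.6}.
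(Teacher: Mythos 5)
Your argument is correct, but it takes a different route from the paper's. The paper works directly on the integral: it writes $I_s^\nu[D_s^\nu f]=D_s\{I_s^{\nu+1}[D_s^\nu f]\}$ and then integrates by parts $m$ times inside $\frac{1}{\Gamma(\nu+1)}\int_a^x(x-\tau)^{\nu}e^{-\sigma(x-\tau)}D_s^m[I_s^{m-\nu}f(\tau)]\,d\tau$, transferring one factor of $D_s$ per step from $I_s^{m-\nu}f$ onto the kernel and collecting a boundary term at $\tau=a$ at each step; the outer $D_s$ then turns the resulting $I_sf$ into $f$ and lowers each boundary exponent by one via (\ref{2.9}). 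You instead set $g=I_s^{m-\nu}f$ and read the identity off from the commutation formula (\ref{2.6}) in the proof of Lemma \ref{lemma2.6}, combined with $I_s^\nu I_s^{m-\nu}=I_s^m$ (Lemma \ref{lemma2.2}) and $D_s^m[I_s^m f]=f$ (Lemma \ref{lemma2.3}). Since (\ref{2.6}) is itself obtained by iterating the integration-by-parts identity (\ref{2.2}), the two arguments rest on the same mechanism; what your version buys is that it reuses the already-established formula instead of recomputing it, and it makes the bookkeeping (the reindexing $j=m-k$ and the identification $D_s^{m-j}[I_s^{m-\nu}f]=D_s^{\nu-j}f$, with the $k=0$ term read through the extension (\ref{2.5})) explicit. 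The point you flag as the main obstacle is exactly the identification the paper also makes, silently, when it replaces $\{D_s^{m-j}[I_s^{m-\nu}f(x)]\}_{x=a}$ by $[D_s^{\nu-j}f(x)]_{x=a}$ in (\ref{2.8}); and the regularity needed to apply (\ref{2.6}) to $g$ is the same regularity the paper needs to integrate by parts $m$ times, so no generality is lost.
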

\begin{proof}
On the one hand, there exists
\begin{equation}\label{2.7}
\begin{split}
I_s^\nu[D_s^\nu f(x)]=D_s\left\{\frac{1}{\Gamma(\nu+1)}\int_{a}^x{\left(x-\tau\right)^{\nu}}e^{-\sigma(x-\tau)}[D_s^\nu f(\tau)] d\tau \right \}.
\end{split}
\end{equation}
On the other hand, repeatedly integrating by parts and  using Lemma \ref{lemma2.2} we have
\begin{equation}\label{2.8}
\begin{split}
& \frac{1}{\Gamma(\nu+1)}\int_{a}^x{\left(x-\tau\right)^{\nu}}e^{-\sigma(x-\tau)} D_s^\nu f(\tau) d\tau \\
&\quad =\frac{1}{\Gamma(\nu+1)}\int_{a}^x{\left(x-\tau\right)^{\nu}}e^{-\sigma(x-\tau)}D_s^m[I_s^{m-\nu} f(\tau)] d\tau \\
&\quad =\frac{1}{\Gamma(\nu)}\int_{a}^x{\left(x-\tau\right)^{\nu-1}}e^{-\sigma(x-\tau)}D_s^{m-1}[I_s^{m-\nu} f(\tau)] d\tau \\
&\qquad -\frac{ \left(x-a\right)^{\nu} e^{-\sigma(x-a)}  }{\Gamma(\nu+1)} \left \{D_s^{m-1}[I_s^{m-\nu}f(x)]\right \}_{x=a}\\
&\quad =\frac{1}{\Gamma(\nu-m+1)}\int_{a}^x{\left(x-\tau\right)^{\nu-m}}e^{-\sigma(x-\tau)}[I_s^{m-\nu} f(\tau)] d\tau \\
&\qquad -\sum_{j=1}^m \left \{D_s^{m-j}[I_s^{m-\nu}f(x)]\right \}_{x=a}   \frac{ \left(x-a\right)^{\nu-j+1} e^{-\sigma(x-a)}  }{\Gamma(\nu-j+2)}\\
&\quad =I_s^{\nu-m+1}[I_s^{m-\nu} f(\tau)]
         -\sum_{j=1}^m \left [D_s^{\nu-j}f(x)\right]_{x=a}   \frac{ \left(x-a\right)^{\nu-j+1} e^{-\sigma(x-a)}  }{\Gamma(\nu-j+2)}\\
&\quad =I_s f(\tau)-\sum_{j=1}^m \left [D_s^{\nu-j}f(x)\right]_{x=a}   \frac{ \left(x-a\right)^{\nu-j+1} e^{-\sigma(x-a)}  }{\Gamma(\nu-j+2)}.
\end{split}
\end{equation}
Combining \ref{2.7} and \ref{2.8}, we obtain
$$I_s^\nu[D_s^\nu f(x)]=f(x)-\sum_{j=1}^m[D_s^{\nu-j}f(x)]_{x=a}\frac{ \left(x-a\right)^{\nu-j} e^{-\sigma(x-a)}  }{\Gamma(\nu-j+1)}.$$
\end{proof}

\begin{lemma} \label {lemma2.8}
Let $f(x)$ be (m-1)-times continuously differentiable on $(a,\infty)$ and its m-times derivative be integrable on any finite subinterval of  $[a,\infty)$ and  $\mu>0,\nu >0$, where $m$ is the smallest integer that exceeds $\mu$. Then for all  $x>a$,
$$D_s^\mu[D_s^{-\nu} f(x)]=D_s^{\mu-\nu}f(x).$$
\end{lemma}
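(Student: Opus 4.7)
The plan is to reduce the claim to the composition law (Lemma 2.2), the inversion identity (Lemma 2.3), the symmetry law (Lemma 2.4), and the convention $I_s^\alpha = D_s^{-\alpha}$ from (2.5), with a small case split on the sign of $\mu-\nu$.

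First I would rewrite the left-hand side using (2.5) as $D_s^\mu[I_s^\nu f(x)]$, then apply Definition 1.2 to unpack the outer operator: $D_s^\mu[I_s^\nu f(x)] = D_s^m\!\left[I_s^{m-\mu}\!\left[I_s^\nu f(x)\right]\right]$, where $m$ is the smallest integer exceeding $\mu$. Since $m-\mu>0$ and $\nu>0$, Lemma 2.2 consolidates the integrals into $I_s^{m-\mu+\nu}f(x)$, reducing the problem to showing
\begin{equation*}
D_s^m\!\left[I_s^{m-\mu+\nu}f(x)\right] = D_s^{\mu-\nu}f(x).
\end{equation*}

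Next I would treat three cases. If $\mu-\nu>0$, let $m'$ be the smallest integer exceeding $\mu-\nu$, so by Definition 1.2, $D_s^{\mu-\nu}f(x) = D_s^{m'}\!\left[I_s^{m'-(\mu-\nu)}f(x)\right]$. Both expressions have integer order minus integration order equal to $\mu-\nu$, and both integration orders $m-\mu+\nu$ and $m'-\mu+\nu$ are strictly positive (since $m>\mu$ and $m'>\mu-\nu$), so Lemma 2.4 applies and yields the equality. If $\mu=\nu$, then $D_s^{\mu-\nu}f = f$ by Lemma 2.1 / the convention $D_s^0 = \mathrm{Id}$, and the left side is $D_s^m[I_s^m f(x)] = f(x)$ by Lemma 2.3. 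If $\mu<\nu$, then $m-\mu+\nu > m$, so Lemma 2.2 factors $I_s^{m-\mu+\nu} = I_s^{\nu-\mu}\,I_s^{m}$ (with $\nu-\mu>0$); applying $D_s^m$ and invoking Lemma 2.3 leaves $I_s^{\nu-\mu}f(x)$, which equals $D_s^{\mu-\nu}f(x)$ by (2.5).

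The argument is almost entirely bookkeeping; the only real obstacle is making sure the hypotheses of Lemma 2.4 are honestly met in Case 1, i.e., that both integration orders $m-\mu+\nu$ and $m'-\mu+\nu$ are strictly positive so that $I_s$ in Lemma 2.2 and the definition of $D_s$ via positive-order integration remain valid. Given the standing regularity assumption on $f$ (sufficient continuous differentiability and integrability of the top derivative on any finite subinterval), all intermediate applications of Lemmas 2.2 and 2.3 are justified, and the case split exhausts the possibilities for $\mu$ and $\nu$.
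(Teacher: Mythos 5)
Your proposal is correct and follows essentially the same route as the paper: consolidate the integrals via Lemma 2.2, then split on the sign of $\mu-\nu$, using Lemma 2.4 to pass from the integer $m$ to the smallest integer exceeding $\mu-\nu$ when $\mu>\nu$, and Lemma 2.3 to cancel $D_s^m$ against $I_s^m$ (the paper cancels $D_s^\mu$ against $I_s^\mu$ directly) when $\nu\geq\mu$. The only cosmetic difference is that you separate out the case $\mu=\nu$, which the paper absorbs into the case $\nu\geq\mu$.
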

\begin{proof}
Two cases must be considered: $\mu > \nu \geq 0$ and $\nu \geq \mu \geq 0$.

Case $\mu > \nu \geq 0$:   taking $0\leq n-1 \leq \mu-\nu <n$, $n$ is an integer and  using  $0 \leq m-1 \leq \mu <m$, then from (\ref{2.5})  and (\ref{1.2}) and Lemmas  \ref{lemma2.2} and \ref{lemma2.4},
 we have
 \begin{equation*}
\begin{split}
D_s^\mu[D_s^{-\nu} f(x)]&=D_s^\mu[I_s^\nu f(x)]=D_s^{m}\left\{I_s^{m-\mu}[I_s^{\nu}f(x)] \right\}
=D_s^{m}\left\{I_s^{m-\mu+\nu}f(x) \right\} \\
&=D_s^{n}\left\{I_s^{n-\mu+\nu}f(x) \right\}=D_s^{\mu-\nu}f(x).
\end{split}
\end{equation*}

Case $\nu \geq \mu \geq 0$: according to Lemmas \ref{lemma2.2} and \ref{lemma2.3}, we obtain
$$D_s^\mu[I_s^\nu f(x)]=D_s^\mu[I_s^\mu I_s^{\nu-\mu}f(x)]= D_s^{\mu-\nu}f(x).$$
\end{proof}

\begin{lemma} \label {lemma2.9}
Let $f(x)$ be (m-1)-times continuously differentiable on $(a,\infty)$ and its m-times derivative be integrable on any finite subinterval of  $[a,\infty)$ and  $\mu>0,\nu >0$, where $m$ is the smallest integer that exceeds $\nu$. Then for all  $x>a$,
$$D_s^{-\mu}[D_s^\nu f(x)]=D_s^{\nu-\mu}f(x)-\sum_{j=1}^m[D_s^{\nu-j}f(x)]_{x=a}\frac{ \left(x-a\right)^{\mu-j} e^{-\sigma(x-a)}  }{\Gamma(\mu-j+1)}.$$
\end{lemma}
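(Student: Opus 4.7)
The plan is to bootstrap from Lemma \ref{lemma2.7} (which is precisely this statement with $\mu=\nu$) by applying $I_s^{\mu-\nu}$ when $\mu\geq\nu$ or $D_s^{\nu-\mu}$ when $\mu<\nu$ to both sides. Recalling $D_s^{-\mu}=I_s^\mu$ from (\ref{2.5}), the identity we want is
\[
I_s^\mu[D_s^\nu f(x)]=D_s^{\nu-\mu}f(x)-\sum_{j=1}^m[D_s^{\nu-j}f(x)]_{x=a}\,\phi_{\mu-j}(x),
\]
where I abbreviate $\phi_s(x)=(x-a)^s e^{-\sigma(x-a)}/\Gamma(s+1)$.

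The preliminary step is to establish two \emph{shift formulas} for these ``substantial monomials'': $I_s^\alpha[\phi_s(x)]=\phi_{s+\alpha}(x)$ and $D_s^\alpha[\phi_s(x)]=\phi_{s-\alpha}(x)$. The first follows from a one-line Beta-integral calculation: the exponential factors $e^{-\sigma(x-\tau)}e^{-\sigma(\tau-a)}$ collapse to $e^{-\sigma(x-a)}$, leaving $\int_a^x(x-\tau)^{\alpha-1}(\tau-a)^s\,d\tau=B(\alpha,s+1)(x-a)^{\alpha+s}$. The second follows by writing $D_s^\alpha=D_s^m I_s^{m-\alpha}$ and using that $D_s\phi_s=\phi_{s-1}$ (a direct application of $D_s=D+\sigma$ to $\phi_s$), so the integer differentiations act as index shifts.

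For the case $\mu\geq\nu$, I apply $I_s^{\mu-\nu}$ to both sides of Lemma \ref{lemma2.7}. The left-hand side becomes $I_s^{\mu-\nu}\bigl(I_s^\nu[D_s^\nu f(x)]\bigr)=I_s^\mu[D_s^\nu f(x)]$ by the semigroup law in Lemma \ref{lemma2.2}, which equals $D_s^{-\mu}[D_s^\nu f(x)]$ by (\ref{2.5}). The right-hand side becomes $I_s^{\mu-\nu}f(x)=D_s^{\nu-\mu}f(x)$ minus the sum, where each $I_s^{\mu-\nu}[\phi_{\nu-j}]=\phi_{\mu-j}$ by the shift formula.

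For the case $\mu<\nu$, I apply $D_s^{\nu-\mu}$ to both sides of Lemma \ref{lemma2.7}. The left-hand side is $D_s^{\nu-\mu}\bigl(D_s^{-\nu}[D_s^\nu f(x)]\bigr)$, and here Lemma \ref{lemma2.8} (with the roles $\mu\leftarrow\nu-\mu$, $\nu\leftarrow\nu$) collapses the composition to $D_s^{-\mu}[D_s^\nu f(x)]=I_s^\mu[D_s^\nu f(x)]$. The right-hand side yields $D_s^{\nu-\mu}f(x)$ minus the sum, where the shift formula gives $D_s^{\nu-\mu}[\phi_{\nu-j}]=\phi_{\mu-j}$.

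The main technical obstacle will be checking the case $\mu<\nu$: one must verify that Lemma \ref{lemma2.8} genuinely applies to the composed operator $D_s^{\nu-\mu}\circ I_s^\nu$ at the level of the regularity of $D_s^\nu f$, and that the integer-part reshuffling in the boundary terms after applying $D_s^{\nu-\mu}$ is consistent with the summation range $j=1,\dots,m$ from the original Lemma \ref{lemma2.7} (in particular, when $\mu-j$ becomes negative, the $\Gamma(\mu-j+1)$ and $(x-a)^{\mu-j}$ still make sense for $x>a$ and non-integer $\mu$, so no terms need to be dropped). Once these bookkeeping issues are handled, the two cases together cover all $\mu,\nu>0$ and yield the stated identity.
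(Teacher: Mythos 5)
Your proposal is correct and follows essentially the same route as the paper: split into the cases $\mu\geq\nu$ and $\mu<\nu$, write $D_s^{-\mu}=D_s^{\nu-\mu}D_s^{-\nu}$ via Lemma \ref{lemma2.2} in the first case and Lemma \ref{lemma2.8} in the second, then apply $D_s^{\nu-\mu}$ to the identity of Lemma \ref{lemma2.7}, shifting the boundary monomials by the power rule (the paper's Eq.~(\ref{2.9}), which you rederive via the Beta integral rather than citing Podlubny). The only difference is that you spell out the shift formula's proof and the bookkeeping for negative exponents, which the paper leaves implicit.
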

\begin{proof}
If $\nu \leq \mu$, there exists  $D_s^{-\mu}=D_s^{\nu-\mu}D_s^{-\nu}$ by Lemma \ref{lemma2.2};  and
if $\nu \geq \mu$, there  also exists  $D_s^{-\mu}=D_s^{\nu-\mu}D_s^{-\nu}$ by Lemma \ref{lemma2.8}. Therefore, using Lemma \ref{lemma2.7} we have
\begin{equation*}
\begin{split}
D_s^{-\mu}[D_s^\nu f(x)]
&=D_s^{\nu-\mu}\{ D_s^{-\nu}[D_s^\nu f(x)]\} \\
&=D_s^{\nu-\mu}\left\{ f(x)-\sum_{j=1}^m[D_s^{\nu-j}f(x)]_{x=a}\frac{ \left(x-a\right)^{\nu-j} e^{-\sigma(x-a)}  }{\Gamma(\nu-j+1)} \right\} \\
&=D_s^{\nu-\mu}f(x)-\sum_{j=1}^m[D_s^{\nu-j}f(x)]_{x=a}\frac{ \left(x-a\right)^{\mu-j} e^{-\sigma(x-a)}  }{\Gamma(\mu-j+1)},
\end{split}
\end{equation*}
where we use the following formula
\begin{equation}\label{2.9}
D_s^\mu [ e^{-\sigma(x-a)} \left(x-a\right)^{\nu}]=\frac{\Gamma (\nu+1)}{\Gamma (\nu+1-\mu)}\left(x-a\right)^{\nu-\mu}e^{-\sigma(x-a)},
\end{equation}
which can be similarly proven as the way in \cite[p. 56]{Podlubny:99}.
\end{proof}

\begin{lemma}\label {lemma2.10}
Let $\mu>0$, $\nu>0$ and $f(x)$ be (r-1)-times continuously differentiable on $(a,\infty)$ and its r-times derivative be integrable on any finite subinterval of  $[a,\infty)$, where $r=\max (m,n)$,
$m$ and $n$ is the smallest integer that exceeds $\mu$ and  $\nu$, respectively. Then for all  $x>a$,
$$D_s^{\mu}[D_s^\nu f(x)]=D_s^{\mu+\nu}f(x)-\sum_{j=1}^n[D_s^{\nu-j}f(x)]_{x=a}\frac{ \left(x-a\right)^{-\mu-j} e^{-\sigma(x-a)}  }{\Gamma(-\mu-j+1)}.$$
\end{lemma}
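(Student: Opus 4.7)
The plan is to reduce Lemma~\ref{lemma2.10} to an application of Lemma~\ref{lemma2.7} followed by the composition identity of Lemma~\ref{lemma2.8} and the power-function formula (\ref{2.9}). First I would apply Lemma~\ref{lemma2.7} to $f$ with order $\nu$ (so that $n$ is the smallest integer exceeding $\nu$), writing
\begin{equation*}
I_s^\nu[D_s^\nu f(x)] = f(x) - \sum_{j=1}^n [D_s^{\nu-j} f(x)]_{x=a}\,\frac{(x-a)^{\nu-j} e^{-\sigma(x-a)}}{\Gamma(\nu-j+1)}.
\end{equation*}

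Next I would act with $D_s^{\mu+\nu}$ on both sides. On the left-hand side, using (\ref{2.5}) to rewrite $I_s^\nu = D_s^{-\nu}$ and then applying Lemma~\ref{lemma2.8} with $\mu' = \mu+\nu > \nu = \nu'$ (which is the first case of that lemma) to the function $g(x) = D_s^\nu f(x)$, one obtains
\begin{equation*}
D_s^{\mu+\nu}\bigl\{I_s^\nu[D_s^\nu f(x)]\bigr\} = D_s^{\mu+\nu}\bigl[D_s^{-\nu} g(x)\bigr] = D_s^\mu g(x) = D_s^\mu[D_s^\nu f(x)],
\end{equation*}
which is the left-hand side of the identity to be proved.

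For the right-hand side, I would handle the boundary terms one by one using formula (\ref{2.9}) of Lemma~\ref{lemma2.9}: for each $j = 1, \ldots, n$,
\begin{equation*}
D_s^{\mu+\nu}\bigl[e^{-\sigma(x-a)}(x-a)^{\nu-j}\bigr] = \frac{\Gamma(\nu-j+1)}{\Gamma(-\mu-j+1)}\,(x-a)^{-\mu-j} e^{-\sigma(x-a)},
\end{equation*}
so that the factor $\Gamma(\nu-j+1)$ cancels precisely against the denominator coming from Lemma~\ref{lemma2.7}, and the remaining $D_s^{\mu+\nu} f(x)$ term stays untouched. Collecting the two sides then yields the claimed formula.

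The step I expect to require the most care is the left-hand side manipulation: Lemma~\ref{lemma2.8} is stated with a case distinction on whether $\mu'\geq\nu'$ or $\nu'\geq\mu'$, and one must verify that the regularity hypothesis on $g = D_s^\nu f$ (namely that $D_s^\mu g$ is well-defined) follows from the assumption that $f$ is $(r-1)$-times continuously differentiable with $r = \max(m,n)$; this is where the choice $r = \max(m,n)$ rather than just $n$ becomes essential. Once this regularity bookkeeping is in place, the remaining calculation is a straightforward application of (\ref{2.9}) to each boundary term.
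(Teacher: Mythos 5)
Your proof is correct, but it takes a genuinely different route from the paper's. The paper first establishes the auxiliary identity $D_s^n[D_s^{\gamma}f]=D_s^{n+\gamma}f$ (its Eq.~(\ref{2.10})), then writes $D_s^{\mu}[D_s^{\nu}f]=D_s^{m}\{D_s^{-(m-\mu)}[D_s^{\nu}f]\}$ via the definition (\ref{1.2}), expands the inner bracket with Lemma~\ref{lemma2.9} (taking $\mu\mapsto m-\mu$ there), and finally pushes the \emph{integer-order} operator $D_s^{m}$ through, using (\ref{2.10}) for the leading term and (\ref{2.9}) for the boundary terms. You instead go back one level: you start from Lemma~\ref{lemma2.7} (on which Lemma~\ref{lemma2.9} is itself built) and apply the single fractional operator $D_s^{\mu+\nu}$ to the whole identity; the boundary-term computation via (\ref{2.9}) is then identical in spirit, with the same cancellation of $\Gamma(\nu-j+1)$. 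What your route buys is directness --- one application of Lemma~\ref{lemma2.7} instead of routing through Lemma~\ref{lemma2.9} --- at the cost of a more delicate left-hand side. On that point, one caution: invoking Lemma~\ref{lemma2.8} with order $\mu+\nu$ on $g=D_s^{\nu}f$ nominally requires $g$ to be smooth to order roughly $m+n$, and this does \emph{not} follow from $f$ being $(r-1)$-times differentiable with $r=\max(m,n)$, so the regularity bookkeeping you flag does not quite close as stated. A cleaner justification of the same step avoids Lemma~\ref{lemma2.8} entirely: writing $m''$ for the smallest integer exceeding $\mu+\nu$, one has $D_s^{\mu+\nu}\{I_s^{\nu}[D_s^{\nu}f]\}=D_s^{m''}\{I_s^{m''-\mu-\nu}I_s^{\nu}[D_s^{\nu}f]\}=D_s^{m''}\{I_s^{m''-\mu}[D_s^{\nu}f]\}=D_s^{\mu}[D_s^{\nu}f]$ by Lemmas~\ref{lemma2.2} and~\ref{lemma2.4}. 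With that substitution your argument is complete, and its level of rigor about regularity matches the paper's own.
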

\begin{proof}
Similar to the well-known property of integer-order derivatives:
$$\frac{d^m}{dx^m} \left(\frac{d^n f(x)}{dx^n} \right)=\frac{d^n}{dx^n} \left(\frac{d^m f(x)}{dx^m} \right)=\frac{d^{m+n}f(x)}{dx^{m+n}}, $$
it is easy to check that
$$D_s^m \left[D_s^nf(x)\right]=D_s^n \left[D_s^mf(x)\right]=D_s^{m+n}f(x).$$
Therefore, according to (\ref{1.2}), the above equation, and Lemma \ref {lemma2.8}, there exists
\begin{equation*}
 D_s^n \left[D_s^{m-\alpha} f(x)\right]=D_s^{n+m}[I_s^\alpha f(x)]=D_s^{n+m-\alpha}f(x), ~~{\rm for}~~\alpha \in (0,1],
\end{equation*}
and denoting that $\gamma=m-\alpha$, it leads to
\begin{equation}\label{2.10}
 D_s^n \left[D_s^{\gamma} f(x)\right]=D_s^{n+\gamma}f(x).
\end{equation}

According to (\ref{1.2}), Lemma \ref {lemma2.9}, and (\ref{2.10}), we obtain
\begin{equation*}
\begin{split}
 D_s^{\mu}[D_s^\nu f(x)]&=D_s^m \left \{D_s^{-(m-\mu)} [D_s^\nu f(x)]           \right \}  \\
  &=D_s^m \left \{  D_s^{\mu+\nu-m}f(x)
  -\sum_{j=1}^n[D_s^{\nu-j}f(x)]_{x=a}\frac{ \left(x-a\right)^{m-\mu-j} e^{-\sigma(x-a)}  }{\Gamma(m-\mu-j+1)}  \right \} \\
  &=D_s^{\mu+\nu}f(x)-\sum_{j=1}^n[D_s^{\nu-j}f(x)]_{x=a}\frac{ \left(x-a\right)^{-\mu-j} e^{-\sigma(x-a)}  }{\Gamma(-\mu-j+1)}.
\end{split}
\end{equation*}
\end{proof}

Similar to the proof of \cite[p. 76-77]{Podlubny:99}, we have the following Remarks.
\begin{remark}
If  $D_s^\mu f(x)$ exists and is integrable,
then the fractional substantial derivative $D_s^\nu f(x)$ also exists and is integrable for  $0<\nu<\mu$.
\end{remark}

\begin{remark}
Let $f(x)$  be (m-1)-times continuously differentiable on $(a,\infty)$ and its m-times derivative be integrable on any finite subinterval of  $[a,\infty)$, then for all $x\geq a$,
$$[D_s^\mu f(x)]_{x=a}=0,~~m-1 \leq \mu <m$$ if and only if
$$D_s^{(j)}=0, ~~{\rm for}~~j=0,1,\ldots, m-1.$$
\end{remark}

\section{Discretizations of fractional substantial calculus and its convergence; Fourier transform methods}
In this section, we derive the discretization schemes of fractional substantial calculus and prove their convergence by Fourier transform method.
\begin{lemma}  \label{lemma3.1}
 Let  $\nu>0$, $f(x) \in L^q(\mathbb{R})$, $q \geq 1$, and
\begin{equation}\label{3.1}
\mathcal{I}_s^\nu f(x)=\frac{1}{\Gamma(\nu)}\int_{{-\infty}}^x{\left(x-\tau\right)^{\nu-1}}e^{-\sigma(x-\tau)}{f(\tau)}d\tau,
\end{equation}
then
\begin{eqnarray*}
 \mathcal{F}(\mathcal{I}_s^\nu f(x))=(\sigma-i\omega)^{-\nu}\widehat{f}(\omega),
\end{eqnarray*}
where $\mathcal{F}$ denotes Fourier transform operator and $\widehat{f}(\omega)=\mathcal{F}(f)$, i.e.,
\begin{eqnarray*}
    \widehat{f}(\omega)=\int_{\mathbb{R}}e^{i\omega x }f(x)dx.
 \end{eqnarray*}
\end{lemma}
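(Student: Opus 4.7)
The plan is to recognize $\mathcal{I}_s^\nu f$ as a convolution and then apply the convolution theorem, so the core task reduces to computing one explicit Fourier transform of a one-sided kernel.

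First I would define the kernel
$$g_\nu(x)=\begin{cases}\dfrac{x^{\nu-1}e^{-\sigma x}}{\Gamma(\nu)}, & x>0,\\[2pt] 0, & x\le 0,\end{cases}$$
so that $\mathcal{I}_s^\nu f(x)=(g_\nu * f)(x)$ on $\mathbb{R}$. Assuming $\sigma>0$ (the regime in which the integrals converge unconditionally), one checks that $g_\nu\in L^1(\mathbb{R})$ since $\int_0^\infty x^{\nu-1}e^{-\sigma x}dx=\Gamma(\nu)\sigma^{-\nu}<\infty$. Then for $f\in L^q(\mathbb{R})$ with $q\ge 1$, Young's convolution inequality guarantees $\mathcal{I}_s^\nu f\in L^q(\mathbb{R})$, so the convolution theorem is applicable in the appropriate sense (classical for $q\in[1,2]$, distributional otherwise).

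Next I would compute $\widehat{g_\nu}(\omega)$ directly:
\begin{equation*}
\widehat{g_\nu}(\omega)=\frac{1}{\Gamma(\nu)}\int_0^\infty x^{\nu-1} e^{-(\sigma-i\omega)x}dx.
\end{equation*}
Since $\mathrm{Re}(\sigma-i\omega)=\sigma>0$, the standard Gamma identity
$$\int_0^\infty x^{\nu-1}e^{-\alpha x}dx=\frac{\Gamma(\nu)}{\alpha^\nu}\quad (\mathrm{Re}\,\alpha>0)$$
yields $\widehat{g_\nu}(\omega)=(\sigma-i\omega)^{-\nu}$, where the branch of the power is fixed by the principal branch of the logarithm (which is unambiguous in the right half-plane $\mathrm{Re}\,\alpha>0$). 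Applying the convolution theorem $\mathcal{F}(g_\nu*f)=\widehat{g_\nu}\cdot\widehat{f}$ then produces the stated identity.

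The main technical point, rather than any calculation, is justifying the use of the convolution/Fubini theorem and the branch of $(\sigma-i\omega)^{-\nu}$. For $f\in L^1\cap L^q$ and $g_\nu\in L^1$, Fubini applies without difficulty; for general $f\in L^q$ with $q>1$, I would either restrict to the Schwartz-dense case and pass to the limit in $L^q$, or interpret the Fourier transform in the tempered-distribution sense, both of which are standard. If $\sigma\le 0$ is to be allowed, one has to reinterpret the kernel's Fourier transform through analytic continuation in $\sigma$ (or multiply $f$ by $e^{\sigma x}$ to reduce to the Riemann--Liouville case with $\sigma=0$, which is the classical Laplace/Fourier convolution identity); the formula $(\sigma-i\omega)^{-\nu}$ then persists with the principal branch. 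This is the only place where care is required; everything else is a direct application of standard tools.
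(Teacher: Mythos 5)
Your proposal is correct and follows essentially the same route as the paper: both identify $\mathcal{I}_s^\nu f$ as a convolution with the one-sided kernel $x^{\nu-1}e^{-\sigma x}/\Gamma(\nu)$, evaluate that kernel's Fourier transform via the Gamma integral $\int_0^\infty x^{\nu-1}e^{-(\sigma-i\omega)x}\,dx=\Gamma(\nu)(\sigma-i\omega)^{-\nu}$ (the paper phrases this as a Laplace transform evaluated at $s=-i\omega$), and conclude by the convolution theorem. Your additional remarks on Young's inequality, the branch of the power, and the case $\sigma\le 0$ are sound refinements of details the paper leaves implicit.
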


\begin{proof}
Taking the fractional substantial integral (\ref{1.1}) with the lower terminal $a=-\infty$, Eq. (\ref{1.1})
reduces to (\ref{3.1}).

Let us start with the Laplace transform of the function
$$h(x)=\frac{x^{\nu-1}}{\Gamma(\nu) }e^{-\sigma x},$$
i.e.,
\begin{equation}\label{3.2}
\begin{split}
\frac{1}{\Gamma(\nu)}
 \int_{0}\nolimits^\infty{x^{\nu-1}}{e^{-(\sigma +s)x}}dx=(\sigma +s)^{-\nu},
\end{split}
\end{equation}
where we use the well-known Laplace transform of the function $x^{\nu-1}$
$$L\{x^{\nu-1};s\}=\int_{0}\nolimits^\infty x^{\nu-1}e^{-sx}dx=\Gamma(\nu)s^{-\nu}.$$
 It follows from the Dirichlet theorem \cite[p. 564]{Fikhtengoltz:69} that the integral (\ref{3.2}) converges if $\nu>0$. Taking $s=-i\omega$, where $\omega$ is real, we immediately have the Fourier transform of the function
 \begin{equation*}
\begin{split}
h_{+}(x)= \left\{ \begin{array}
 {l@{\quad} l}
\frac{x^{\nu-1}}{\Gamma(\nu) }e^{-\sigma x},&x> 0;\\
0,& x\leq0,
 \end{array}
 \right.
  \end{split}
\end{equation*}
in the form
\begin{equation*}
\begin{split}
\mathcal{F}(h_{+}(x))=
 \int_{-\infty}\nolimits^\infty h_{+}(x)e^{i\omega x }dx
=\frac{1}{\Gamma(\nu)}
 \int_{0}\nolimits^\infty{x^{\nu-1}}{e^{-(\sigma -i\omega)x}}dx=(\sigma -i\omega)^{-\nu}.
\end{split}
\end{equation*}
 Since
   \begin{equation*}
 \mathcal{I}_s^\nu f(x)=\frac{1}{\Gamma(\nu)}\int_{-\infty}^x{\left(x-\tau\right)^{\nu-1}}e^{-\sigma (x-\tau)}{f(\tau)}d\tau
 =\frac{x^{\nu-1}e^{-\sigma x}}{\Gamma(\nu) }*f(x)=h(x)*f(x),
\end{equation*}
where the asterisk means the convolution, then we have
  \begin{equation*}
   \begin{split}
 &\mathcal{F}(\mathcal{I}_s^\nu f(x))=\mathcal{F}(h(x)*f(x))=\mathcal{F}(h(x))\cdot \mathcal{F}(f(x))=(\sigma -i\omega)^{-\nu}\widehat{f}(\omega).
   \end{split}
 \end{equation*}

\end{proof}

\begin{lemma}  \label{lemma3.2}
 Let  $\nu>0$, $f \in C_0^{m-1}(a,\infty)$ and its m-times derivative be integrable on any finite subinterval of  $[a,\infty)$.  Denoting that
\begin{equation}\label{3.3}
  \mathcal{D}_s^\nu f(x)={D}_s^m[\mathcal{I}_s^{m-\nu} f(x)],
\end{equation}
where $m$ is the smallest integer that exceeds $\nu$
and  ${D}_s^m$ and  $\mathcal{I}_s^{m-\nu}$ are  defined by (\ref{1.3}) and  (\ref{3.1}), respectively. Then
\begin{eqnarray*}
 \mathcal{F}(\mathcal{D}_s^\nu f(x))=(\sigma-i\omega)^{\nu}\widehat{f}(\omega).
\end{eqnarray*}
\end{lemma}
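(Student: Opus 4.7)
The plan is to combine Lemma~\ref{lemma3.1} with the Fourier transform rule for the first-order substantial derivative $D_s=D+\sigma$, and then iterate.

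First, I would compute $\mathcal{F}(D_s g)$ for a sufficiently regular function $g$. With the convention $\widehat{g}(\omega)=\int_{\mathbb{R}}e^{i\omega x}g(x)\,dx$, integration by parts gives $\mathcal{F}(g')(\omega)=-i\omega\,\widehat{g}(\omega)$, provided the boundary terms vanish at $\pm\infty$. Adding the $\sigma g$ term yields the identity
\begin{equation*}
\mathcal{F}(D_s g)(\omega)=(\sigma-i\omega)\,\widehat{g}(\omega).
\end{equation*}
The hypothesis $f\in C_0^{m-1}(a,\infty)$ (interpreted as compact support) together with the smoothing property of $\mathcal{I}_s^{m-\nu}$ ensures that $g=\mathcal{I}_s^{m-\nu}f$ and its first $m-1$ derivatives are continuous with sufficient decay, so the integration-by-parts boundary terms do vanish.

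Second, I would apply this identity $m$ times to obtain $\mathcal{F}(D_s^m g)(\omega)=(\sigma-i\omega)^m\,\widehat{g}(\omega)$. Combined with Lemma~\ref{lemma3.1} applied to $\mathcal{I}_s^{m-\nu}f$, which gives $\widehat{g}(\omega)=(\sigma-i\omega)^{-(m-\nu)}\widehat{f}(\omega)$, this chains together as
\begin{equation*}
\mathcal{F}(\mathcal{D}_s^\nu f)(\omega)=\mathcal{F}\bigl(D_s^m[\mathcal{I}_s^{m-\nu}f]\bigr)(\omega)
=(\sigma-i\omega)^m(\sigma-i\omega)^{-(m-\nu)}\widehat{f}(\omega)=(\sigma-i\omega)^{\nu}\widehat{f}(\omega),
\end{equation*}
which is the claim.

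The main obstacle I expect is the rigorous justification that iterating $D_s$ under the Fourier transform is legitimate: one must verify that each intermediate function $D_s^k[\mathcal{I}_s^{m-\nu}f]$, for $k=0,1,\dots,m-1$, is integrable (or at least in an $L^q$ space in which the Fourier transform is defined) and that the boundary values at $\pm\infty$ vanish so that the integration by parts produces no surface contributions. The compact support of $f$ makes $\mathcal{I}_s^{m-\nu}f$ decay like $x^{m-\nu-1}e^{-\sigma x}$ for large $x$ (and vanish to the left of the support), which, combined with the classical smoothness from the convolution with $h(x)=x^{m-\nu-1}e^{-\sigma x}/\Gamma(m-\nu)$, provides the required regularity. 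Once this bookkeeping is done, the algebraic identity $(\sigma-i\omega)^m(\sigma-i\omega)^{-(m-\nu)}=(\sigma-i\omega)^\nu$ closes the argument.
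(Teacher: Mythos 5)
Your proof is correct, and it reaches the result by the same two ingredients as the paper — Lemma~\ref{lemma3.1} and the multiplier identity $\mathcal{F}(D_s^m g)=(\sigma-i\omega)^m\widehat{g}$ — but it applies them in the opposite order. The paper first commutes the operators, using (\ref{2.4}) with lower terminal $a=-\infty$ (so that all boundary terms vanish) to write $\mathcal{D}_s^\nu f=D_s^m[\mathcal{I}_s^{m-\nu}f]=\mathcal{I}_s^{m-\nu}[D_s^m f]$, and only then takes Fourier transforms: the differentiation lands on $f$ itself, where the hypotheses ($f\in C_0^{m-1}$ with integrable $m$-th derivative) make $\mathcal{F}(D_s^m f)=(\sigma-i\omega)^m\widehat{f}$ immediate by induction, and Lemma~\ref{lemma3.1} is then applied to the nice function $D_s^m f$. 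You instead keep the composition in its original order and push $D_s$ through the Fourier transform acting on $g=\mathcal{I}_s^{m-\nu}f$, which forces you to verify the regularity and decay of the convolution and its first $m-1$ derivatives — exactly the bookkeeping you flag as the main obstacle. That bookkeeping is genuine but manageable (note $m-\nu\in(0,1]$, so the kernel $x^{m-\nu-1}e^{-\sigma x}$ is integrable near $0$ and, for $\Re\sigma>0$, decays at infinity), whereas the paper's ordering sidesteps it entirely. So both routes close, but the paper's commutation step is precisely the device that makes the surface-term verification trivial; if you adopt it, your second paragraph of justification becomes unnecessary.
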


\begin{proof}
  Taking the lower terminal $a=-\infty$ and  using  (\ref{2.4}), we obtain
  \begin{equation*}
\mathcal{D}_s^\nu f(x)= {D}_s^m[\mathcal{I}_s^{m-\nu} f(x)] = \mathcal{I}_s^{m-\nu}[{D}_s^m f(x)].
\end{equation*}
Then from Lemma \ref{lemma3.1}, there exists
\begin{eqnarray*}
 \mathcal{F}(\mathcal{D}_s^\nu f(x))
 =(\sigma-i\omega)^{\nu-m}\mathcal{F}( {D}_s^m f(x))=(\sigma-i\omega)^{\nu}\widehat{f}(\omega),
\end{eqnarray*}
where $\mathcal{F}( {D}_s^m f(x))=(\sigma-i\omega)^{m}\widehat{f}(\omega)$ can be proven by the mathematical induction.
\end{proof}

%

In the following,  we do the expansions to (\ref{1.4}) to get the formulas of the coefficients when $p=1,2,3,4,5$; and we  prove that the operators have their respective desired convergent order by the technique of Fourier transform.

First, taking $p=1$ and $h$ be the uniform space stepsize, then from (\ref{1.4}), we have
\begin{equation*}
\begin{split}
\kappa^{1,\alpha}(\zeta)=(1-\frac{\zeta}{e^{\sigma h}})^{\alpha}
= \sum_{m=0}^{\infty}e^{-m\sigma h}(-1)^m\left ( \begin{matrix} \alpha \\ m\end{matrix} \right )\zeta^m
= \sum_{m=0}^{\infty}{g}_m^{1,\alpha}\zeta^m,
\end{split}
\end{equation*}
with the recursively formula
\begin{equation}\label{3.4}
  {g}_0^{1,\alpha}=1, ~~~~{g}_m^{1,\alpha}=e^{-\sigma h}\left(1-\frac{\alpha+1}{m}\right){g}_{m-1}^{1,\alpha},~~m \geq 1,
\end{equation}
where $\sigma $ is defined in Definition \ref{definition1.1}.

Similar to the way performed in \cite{Chen:13}, it is easy to compute
\begin{equation}\label{3.5}
\kappa^{p,\alpha}(\zeta) = \left(\sum_{i=1}^p\frac{1}{i}\left(1-\frac{\zeta}{e^{\sigma h}}\right)^i\right)^{\alpha}
= \sum_{m=0}^{\infty}{g}_m^{p,\alpha}\zeta^m,~~~~p=1,2,3,4,5,
\end{equation}
with ${g}_m^{1,\alpha}$ given in (\ref{3.4});
and
$${g}_m^{p,\alpha}=e^{-\sigma mh}{l}_m^{p,\alpha},~~~~p=1,2,3,4,5,$$
where ${l}_m^{1,\alpha}$, ${l}_m^{2,\alpha}$,    ${l}_m^{3,\alpha}$, ${l}_m^{4,\alpha}$ and  ${l}_m^{5,\alpha}$
are defined by (2.2),  (2.4), (2.6), (2.8) and (2.10) in \cite{Chen:13}, respectively. And it implies that
to get the coefficients ${g}_m^{p,\alpha}$, we only need  to compute the coefficients ${l}_m^{p,\alpha}$.

\begin{theorem}(Case $p=1$)\label{theorem3.3}
   Let  $f$, $\mathcal{D}_s^{\alpha+1} f(x)$ with $\alpha>0$  and their Fourier transforms belong to $L_1(\mathbb{R})$, and  denote that
 \begin{equation*}
A^{1,\alpha}f(x)=\frac{1}{h^{\alpha}}\sum_{m=0}^{\infty}{g}_m^{1,\alpha}f(x-mh),
\end{equation*}
 where $\mathcal{D}_s^{\alpha+1}$ and ${g}_m^{1,\alpha}$ is defined by (\ref{3.3}) and (\ref{3.4}), respectively,  Then
$$
 \mathcal{D}_s^\alpha f(x)=A^{1,\alpha}f(x)+\mathcal{O}(h).
$$
\end{theorem}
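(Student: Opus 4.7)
The plan is to work entirely on the Fourier side, compare the symbols of the two operators $A^{1,\alpha}$ and $\mathcal{D}_s^{\alpha}$, and recover the pointwise convergence by inverting the transform under the $L_1$ assumptions.

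First I would take the Fourier transform of $A^{1,\alpha}f(x)$ using the translation rule $\mathcal{F}(f(\cdot-mh))(\omega) = e^{im\omega h}\widehat{f}(\omega)$, which gives
\begin{equation*}
\mathcal{F}(A^{1,\alpha}f)(\omega) = \frac{1}{h^{\alpha}}\left(\sum_{m=0}^{\infty} g_m^{1,\alpha} e^{im\omega h}\right)\widehat{f}(\omega).
\end{equation*}
By the definition of the generating function $\kappa^{1,\alpha}(\zeta)=(1-e^{-\sigma h}\zeta)^{\alpha}$ specialized at $\zeta=e^{i\omega h}$, the bracketed sum equals $(1-e^{(i\omega-\sigma)h})^{\alpha}$, so the symbol of $A^{1,\alpha}$ is $h^{-\alpha}(1-e^{(i\omega-\sigma)h})^{\alpha}$. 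On the other hand, Lemma \ref{lemma3.2} identifies the symbol of $\mathcal{D}_s^{\alpha}$ as $(\sigma-i\omega)^{\alpha}$.

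The next step is a Taylor expansion. Writing $z=(i\omega-\sigma)h$ and using $1-e^z = -z\bigl(1+z/2+z^2/6+\cdots\bigr)$, I obtain
\begin{equation*}
\frac{1-e^{(i\omega-\sigma)h}}{h} = (\sigma-i\omega)\left(1+\frac{(i\omega-\sigma)h}{2}+\mathcal{O}(h^2)\right),
\end{equation*}
and raising this to the power $\alpha$ yields
\begin{equation*}
\frac{(1-e^{(i\omega-\sigma)h})^{\alpha}}{h^{\alpha}} = (\sigma-i\omega)^{\alpha} - \frac{\alpha h}{2}(\sigma-i\omega)^{\alpha+1} + \mathcal{O}\!\left(h^2(\sigma-i\omega)^{\alpha+2}\right).
\end{equation*}
Multiplying by $\widehat{f}(\omega)$ and recognizing $(\sigma-i\omega)^{\alpha+1}\widehat{f}(\omega)=\mathcal{F}(\mathcal{D}_s^{\alpha+1}f)(\omega)$ via Lemma \ref{lemma3.2} gives the symbol-level identity
\begin{equation*}
\mathcal{F}(A^{1,\alpha}f)(\omega) - \mathcal{F}(\mathcal{D}_s^{\alpha}f)(\omega) = -\frac{\alpha h}{2}\,\mathcal{F}(\mathcal{D}_s^{\alpha+1}f)(\omega) + R(\omega,h)\widehat{f}(\omega),
\end{equation*}
with $|R(\omega,h)|\le Ch^2|\sigma-i\omega|^{\alpha+2}$ on any region where the Taylor remainder is controlled.

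Finally I would invert the Fourier transform. Under the hypothesis that $\widehat{f}$ and $\mathcal{F}(\mathcal{D}_s^{\alpha+1}f)$ are both in $L_1(\mathbb{R})$, the inverse transform of the leading error term is bounded in absolute value by $\tfrac{\alpha h}{2}\,(2\pi)^{-1}\|\mathcal{F}(\mathcal{D}_s^{\alpha+1}f)\|_{L_1}$, giving a uniform $\mathcal{O}(h)$ bound. The main obstacle I anticipate is making the higher-order remainder $R(\omega,h)\widehat{f}(\omega)$ rigorously integrable in $\omega$: the Taylor remainder in $z$ is only valid uniformly for $|z|$ bounded, so for large $|\omega|$ one must argue separately, either by bounding $|1-e^{(i\omega-\sigma)h}|^{\alpha}/h^{\alpha}$ and $|\sigma-i\omega|^{\alpha}$ individually and absorbing the excess decay into $\widehat{f}$, or by splitting the frequency domain at a threshold $|\omega h|\le c$ and using the $L_1$-hypothesis on $\mathcal{F}(\mathcal{D}_s^{\alpha+1}f)$ to control the tails. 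Once this technical point is handled, the inverse Fourier transform immediately yields $\mathcal{D}_s^{\alpha}f(x) = A^{1,\alpha}f(x) + \mathcal{O}(h)$ pointwise in $x$.
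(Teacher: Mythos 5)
Your proposal follows essentially the same route as the paper's proof: Fourier transform of $A^{1,\alpha}f$ via the translation rule, summation of the generating series to get the symbol $h^{-\alpha}\bigl(1-e^{-(\sigma-i\omega)h}\bigr)^{\alpha}$, Taylor expansion of $\bigl(\tfrac{1-e^{-z}}{z}\bigr)^{\alpha}$ with $z=(\sigma-i\omega)h$, identification of the $\mathcal{O}(h)$ error with $(\sigma-i\omega)^{\alpha+1}\widehat{f}(\omega)$, and inversion under the $L_1$ hypothesis on $\mathcal{F}(\mathcal{D}_s^{\alpha+1}f)$. The uniformity-in-$\omega$ issue you flag for the Taylor remainder is real but is glossed over in the paper as well, and is resolved exactly as you suggest (boundedness of $(1-e^{-z})/z$ on $\Re(z)\ge 0$ handles large $|\omega h|$).
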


\begin{proof}
Using Fourier transform, we obtain
\begin{equation*}
\begin{split}
\mathcal{F}(A^{1,\alpha}f)(\omega)&=\frac{1}{h^{\alpha}}\sum_{m=0}^{\infty}{g}_m^{1,\alpha}\mathcal{F}\left(f(x-mh)\right)(\omega) \\
&=\frac{1}{h^{\alpha}} \sum_{m=0}^{\infty}{g}_m^{1,\alpha} \left(e^{i\omega h}\right)^m \widehat{f}(\omega)\\
&=\frac{1}{h^{\alpha}} \left(1-\frac{e^{i\omega h}}{e^{\sigma h}}\right)^{\alpha}\widehat{f}(\omega)\\ 
&=(\sigma -i\omega)^{\alpha} \left(\frac{1-e^{-(\sigma -i\omega) h}}{ (\sigma -i\omega) h }\right )^{\alpha}\widehat{f}(\omega)\\
&=(\sigma -i\omega)^{\alpha} \left(\frac{1-e^{-z}}{z}\right )^{\alpha}\widehat{f}(\omega),
  \end{split}
\end{equation*}
with $z=(\sigma -i\omega) h$. It is easy to check that
\begin{equation*}
\begin{split}
\left(\frac{1-e^{-z}}{z}\right )^{\alpha} =1- \frac{\alpha}{2}z +\frac{3\alpha^2+\alpha}{24} z^2 -\frac{\alpha^3+\alpha^2}{48} z^3 +\mathcal{O}(z^4).
\end{split}
\end{equation*}
Therefore, from Lemma \ref{lemma3.2},  there exists
\begin{equation*}
\begin{split}
\mathcal{F}(A^{1,\alpha}f)(\omega)=\mathcal{F}( \mathcal{D}_s^\alpha f)+ \widehat{\phi}(\omega),
  \end{split}
\end{equation*}
where $ \widehat{\phi}(\omega)=(\sigma -i\omega)^{\alpha}\left(-\frac{\alpha}{2}z+\mathcal{O}(z^2)\right)\widehat{f}(\omega)$,
$z=(\sigma -i\omega) h$. Then
\begin{equation*}
\begin{split}
&|\widehat{\phi}(\omega)| \leq \widetilde{c}\cdot|(\sigma -i\omega)^{\alpha+1}\widehat{f}(\omega)|\cdot h.
\end{split}
\end{equation*}
With the condition  $\mathcal{F}[\mathcal{D}_s^{\alpha+1} f(x)] \in L_1(\mathbb{R})$, it leads to
\begin{equation*}
\begin{split}
| \mathcal{D}_s^\alpha f(x)-A^{1,\alpha}f(x)|=|\phi(x)| \leq \frac{1}{2\pi}\int_{\mathbb{R}}|\widehat{\phi}(\omega)|dx
\leq c || \mathcal{F}[\mathcal{D}_s^{\alpha+1} f](\omega)||_{L^1}\cdot h= \mathcal{O}(h).
  \end{split}
\end{equation*}
\end{proof}

\begin{theorem}\label{theorem3.7}
   Let  $f$, $\mathcal{D}_s^{\alpha+p} f(x)$ ($p=2,3,4,5$) with $\alpha>0$  and their Fourier transforms belong to $L_1(\mathbb{R})$, and  denote that
 \begin{equation*}
A^{p,\alpha}f(x)=\frac{1}{h^{\alpha}}\sum_{m=0}^{\infty}{g}_m^{p,\alpha}f(x-mh),
\end{equation*}
 where ${g}_m^{p,\alpha}$ is defined by (\ref{3.5}).  Then
$$
 \mathcal{D}_s^\alpha f(x)=A^{p,\alpha}f(x)+\mathcal{O}(h^p),~~ p=2,3,4,5.
$$
\end{theorem}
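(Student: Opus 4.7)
The plan is to mimic the Fourier transform argument of Theorem~\ref{theorem3.3} verbatim, with the only new ingredient being a higher-order expansion of the symbol of $A^{p,\alpha}$. Applying $\mathcal{F}$ to $A^{p,\alpha}f$ and using the shift rule $\mathcal{F}(f(x-mh))(\omega)=e^{i\omega m h}\widehat f(\omega)$ together with (\ref{3.5}), I would obtain
\begin{equation*}
\mathcal{F}(A^{p,\alpha}f)(\omega)=\frac{1}{h^{\alpha}}\kappa^{p,\alpha}(e^{i\omega h})\widehat f(\omega)
=\frac{1}{h^{\alpha}}\Biggl(\sum_{i=1}^{p}\frac{1}{i}\bigl(1-e^{-(\sigma-i\omega)h}\bigr)^{i}\Biggr)^{\!\alpha}\widehat f(\omega).
\end{equation*}
Setting $z=(\sigma-i\omega)h$ (as in the $p=1$ case), the goal then reduces to proving that the rational symbol
\begin{equation*}
W_p(z):=\frac{1}{z}\sum_{i=1}^{p}\frac{1}{i}\bigl(1-e^{-z}\bigr)^{i}
\end{equation*}
satisfies $W_p(z)=1+\mathcal{O}(z^{p})$ as $z\to 0$, since this would give $\frac{1}{h^\alpha}\kappa^{p,\alpha}(e^{i\omega h})=(\sigma-i\omega)^{\alpha}\bigl(1+\mathcal{O}(z^{p})\bigr)$, matching Lemma~\ref{lemma3.2}.

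The key analytic step is the identification of $\sum_{i=1}^{p}\frac{u^{i}}{i}$ as the $p$-th partial sum of the Taylor series of $-\log(1-u)$ about $u=0$. Substituting $u=1-e^{-z}$ (so that $-\log(1-u)=z$ and $u=\mathcal{O}(z)$), I would write
\begin{equation*}
\sum_{i=1}^{p}\frac{(1-e^{-z})^{i}}{i}=-\log\bigl(1-(1-e^{-z})\bigr)-\sum_{i=p+1}^{\infty}\frac{(1-e^{-z})^{i}}{i}=z+\mathcal{O}(z^{p+1}),
\end{equation*}
from which $W_p(z)=1+\mathcal{O}(z^{p})$, and hence $W_p(z)^{\alpha}=1+\mathcal{O}(z^{p})$ by a standard Taylor expansion of $(1+\eta)^{\alpha}$ at $\eta=0$. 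This is exactly the high-order analogue of the expansion $\bigl((1-e^{-z})/z\bigr)^{\alpha}=1-\tfrac{\alpha}{2}z+\mathcal{O}(z^{2})$ used in the proof of Theorem~\ref{theorem3.3}.

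Combining the symbol expansion with Lemma~\ref{lemma3.2}, I would write
\begin{equation*}
\mathcal{F}(A^{p,\alpha}f)(\omega)=\mathcal{F}(\mathcal{D}_{s}^{\alpha}f)(\omega)+\widehat\phi(\omega),\qquad \widehat\phi(\omega)=(\sigma-i\omega)^{\alpha}\,\mathcal{O}(z^{p})\,\widehat f(\omega),
\end{equation*}
so that $|\widehat\phi(\omega)|\le \widetilde c\,|(\sigma-i\omega)^{\alpha+p}\widehat f(\omega)|\,h^{p}$. Using the assumption $\mathcal{F}[\mathcal{D}_{s}^{\alpha+p}f]\in L_{1}(\mathbb{R})$ (so that $(\sigma-i\omega)^{\alpha+p}\widehat f(\omega)\in L_1$ by Lemma~\ref{lemma3.2}), inverse Fourier transform gives the pointwise bound
\begin{equation*}
|\mathcal{D}_{s}^{\alpha}f(x)-A^{p,\alpha}f(x)|\le\frac{1}{2\pi}\int_{\mathbb{R}}|\widehat\phi(\omega)|\,d\omega\le c\,\|\mathcal{F}[\mathcal{D}_{s}^{\alpha+p}f]\|_{L^{1}}\,h^{p}=\mathcal{O}(h^{p}),
\end{equation*}
which is the claim. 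The main obstacle is purely the verification of the symbol expansion $W_p(z)=1+\mathcal{O}(z^p)$; once that is in hand, the remainder of the argument is a line-for-line repetition of Theorem~\ref{theorem3.3}. I expect no new difficulty from handling $p=2,3,4,5$ individually, since the logarithm identity treats all $p$ uniformly and eliminates the need to verify the expansion case-by-case from the explicit coefficients in Table~\ref{table:01}.
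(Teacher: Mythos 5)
Your proposal is correct and follows essentially the same route the paper takes (and merely sketches): the Fourier-symbol argument of Theorem \ref{theorem3.3} pushed to order $p$, with the expansion $\frac{1}{z}\sum_{i=1}^{p}\frac{1}{i}\left(1-e^{-z}\right)^{i}=1+\mathcal{O}(z^{p})$ playing the role that the case-by-case Lemmas 2.3--2.7 of the cited reference play in the paper's one-line proof. Your observation that this expansion follows uniformly in $p$ from the partial sums of $-\log(1-u)$ evaluated at $u=1-e^{-z}$ is a clean way to avoid verifying the coefficients of Table \ref{table:01} case by case, but it is the same underlying computation.
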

\begin{proof}
Using the ideas of the proof of Theorem \ref{theorem3.3} and Lemmas 2.3-2.7 of \cite{Chen:13}, we can similarly prove this theorem; the details are omitted here.
\end{proof}

\begin{remark}
Theorems \ref{theorem3.3}-\ref{theorem3.7} still hold for the fractional substantial integral operators  $\mathcal{I}_s^\alpha$; in fact, comparing Lemmas \ref{lemma3.1} with \ref{lemma3.2} gives us the intuition.
\end{remark}

All the above schemes are applicable to finite domain, say, $(a, b)$, after performing zero extensions to the functions considered. Let $f(x)$ be the zero extended function from the finite domain $(a, b)$, and satisfy the requirements of the above corresponding theorems. Taking $p=1,2,3,4,5$ and
 \begin{equation}\label{3.6}
  \begin{split}
&\widetilde{A}^{p,\alpha}f(x)=\frac{1}{h^{\alpha}}\sum_{m=0}^{[\frac{x-a}{h}]}{g}_m^{p,\alpha}f(x-mh), ~~\alpha>0,
\end{split}
\end{equation}
with $g_m^{p,\alpha}$ given in (\ref{3.5}). Then
\begin{equation}\label{3.7}
  \begin{split}
    D_{s}^{\alpha}f(x)=\widetilde{A}^{p,\alpha}f(x)+\mathcal{O}(h^p), ~~\alpha>0,
  \end{split}
\end{equation}
where $D_s^\alpha$ is defined by (\ref{1.2}).
 Thus the approximation operator of (\ref{3.6}) can be  described as
 \begin{equation*}
\widetilde{A}^{p,\alpha}f(x_i)=\frac{1}{h^{\alpha}}\sum_{m=0}^{i}{g}_m^{p,\alpha}f(x_{i-m}),~~\alpha>0,
\end{equation*}
and  the fractional substantial derivative has $p$-th order  approximations
\begin{equation}\label{3.8}
  \begin{split}
    D_{s}^{\alpha}f(x_i)
    =h^{-\alpha}\sum_{m=0}^{i}{g}_m^{p,\alpha}f(x_{i-m})+\mathcal{O}(h^p), ~~\alpha>0,
  \end{split}
\end{equation}
Similarly,  the fractional substantial integral has $p$-th order  approximations
\begin{equation}\label{3.9}
  \begin{split}
    I_{s}^{\alpha}f(x_i)
    =h^{\alpha}\sum_{m=0}^{i}{g}_m^{p,-\alpha}f(x_{i-m})+\mathcal{O}(h^p), ~~\alpha>0.
  \end{split}
\end{equation}

\section{Discretizations of fractional substantial calculus and its convergence; fractional linear multistep methods}
Essentially the results given this section are the generalizations of the ones for fractional calculus provided in \cite{Lubich:86} to fractional substantial calculus; some of them are not straightforward, so we restate and prove them. In particular, comparing with Section 3, by adding some terms at the neighborhood of the boundary of the fractional substantial calculus, we can relax the regularity requirements of the performed functions but still preserve the desired convergent order.

For the simplicity of presentation, we take the lower terminal $a=0$ (that is not essential, $a$ can be any given constant but not infinity).
Then the fractional substantial integral (\ref{1.1}) and fractional substantial derivative (\ref{1.2}), respectively,  reduce to
\begin{equation}\label{4.1}
I_s^\alpha f(x)=\frac{1}{\Gamma(\alpha)}\int_{0}^x{\left(x-\tau\right)^{\alpha-1}}e^{-\sigma (x-\tau)}{f(\tau)}d\tau,
\end{equation}
and
\begin{equation}\label{4.2}
D_s^\alpha f(x)=D_s^m[I_s^{m-\alpha} f(x)],
\end{equation}
where $m$ is the smallest integer that exceeds $\alpha$.

For  $\sigma=0$,  the fractional substantial integral (\ref{4.1}) and fractional substantial derivative (\ref{4.2}), respectively,  reduce to the Riemann-Liouville fractional integral
\begin{equation}\label{4.3}
I^\alpha f(x)=\frac{1}{\Gamma(\alpha)}\int_{0}^x{\left(x-\tau\right)^{\alpha-1}}{f(\tau)}d\tau,
\end{equation}
and
Riemann-Liouville fractional derivative  \cite{Gorenflo:97,Miller:93,Oldham:74}
\begin{equation}\label{4.4}
D^\alpha f(x)=\frac{d^m}{dx^m}\frac{1}{\Gamma(m-\alpha)}\int_{0}^x{\left(x-\tau\right)^{m-\alpha-1}}{f(\tau)}d\tau,~~m-1<\alpha<m.
\end{equation}

Using the homogeneity and the convolution structure of  $I^\alpha$ in (\ref{4.3}):
$$
(I^\alpha f)(x)=x^\alpha (I^\alpha f(tx))(1)  ~~{\rm and }~~
I^\alpha f=\frac{1}{\Gamma(\alpha)} t^{\alpha-1}*f,
$$
Lubich gets the following important property \cite{Lubich:86}
\begin{equation}\label{4.5}
\left(E_{h}^\alpha t^{\beta-1}\right)(x)=x^{\alpha+\beta-1}\left(E_{h/x}^\alpha t^{\beta-1}\right)(1),
\end{equation}
with
\begin{equation}\label{4.6}
E_{h}^\alpha=\Omega_{h}^\alpha-I^\alpha ~~{\rm and}~~ \Omega_{h}^\alpha f(x)= h^\alpha \sum _{j=0}^n\omega_{n-j}^\alpha f(jh), ~~(x=nh),
\end{equation}
where $\omega_n^\alpha$ denotes the convolution quadrature weights.
So Lubich obtains  the following   convolution quadratures
 to approximation the Riemann-Liouville fractional integral
\begin{equation}\label{4.7}
  I^\alpha_{h}f(x)=h^\alpha \sum _{j=0}^n\omega_{n-j}^\alpha f(jh)+h^\alpha\sum_{j=1}^r\omega_{n,j}^\alpha f(jh),~~(x=nh),~~\alpha>0,
\end{equation}
where $\omega_{n,j}^\alpha$ denotes the starting quadrature weights. The added term $h^\alpha\sum_{j=1}^r\omega_{n,j}^\alpha f(jh)$ is mainly for keeping the accuracy when relaxing the requirement of the regularity of $f(x)$.  

For  $D^\alpha_{h}f(x)$ or $I^{-\alpha}_{h}f(x)$  in (\ref{4.7}) with $\alpha>0$, taking
$D^{(j)}f(0)=0$, $j=0,1,\ldots,m-1$, $m-1<\alpha<m$, then it yields  the convolution structure of  $D^\alpha$ in (\ref{4.4}):
\begin{equation*}
\begin{split}
D^\alpha f(x)&=\frac{d^m}{dx^m} \left[\frac{1}{\Gamma(m-\alpha)}\int_{0}^x{\left(x-\tau\right)^{m-\alpha-1}}{f(\tau)}d\tau \right] \\
             &=\frac{1}{\Gamma(m-\alpha)}\int_{0}^x{\left(x-\tau\right)^{m-\alpha-1}}(d^m{f(\tau)}/d\tau^m) d\tau =\frac{1}{\Gamma(m-\alpha)} x^{m-\alpha-1}*\frac{d^mf(x)}{dx^m},
\end{split}
\end{equation*}
and the homogeneity of  $D^\alpha$:
$$(D^\alpha f)(x)=x^{m-\alpha} (D^\alpha f(tx))(1),~~m-1<\alpha<m.$$
Therefore, we also obtain following  property
\begin{equation*}
\left(E_{h}^{-\alpha} t^{\beta-1}\right)(x)=x^{-\alpha+\beta-1}\left(E_{h/x}^{-\alpha} t^{\beta-1}\right)(1),~~~~ \beta>m,
\end{equation*}
where
\begin{equation*}
E_{h}^{-\alpha}=\Omega_{h}^{-\alpha}-D^\alpha ~~{\rm and}~~ \Omega_{h}^{-\alpha} f(x)= h^{-\alpha} \sum _{j=0}^n\omega_{n-j}^{-\alpha}f(jh), ~~(x=nh).
\end{equation*}
So similar to the discussions in \cite{Lubich:86}, we can also get the following scheme
 to approximate the Riemann-Liouville fractional derivative
\begin{equation}\label{4.8}
  D^\alpha_{h}f(x)=h^{-\alpha} \sum _{j=0}^n\omega_{n-j}^{-\alpha} f(jh)+h^{-\alpha}\sum_{j=1}^r\omega_{n,j}^{-\alpha} f(jh),~~(x=nh),~~\alpha>0.
\end{equation}
Form (\ref{4.7}) and (\ref{4.8}), there exists
\begin{equation}\label{4.9}
  I^\alpha_{h}f(x)=h^\alpha \sum _{j=0}^n\omega_{n-j} f(jh)+h^\alpha\sum_{j=1}^r\omega_{n,j} f(jh),~~(x=nh),~~\alpha \in \mathbb{R},
\end{equation}
where $\alpha>0$ corresponds to (\ref{4.7}) ($\omega_n=\omega_n^\alpha,\omega_{n,j}=\omega_{n,j}^\alpha$) and  $\alpha<0$ corresponds to (\ref{4.8}) ($\omega_n=\omega_n^{-\alpha},\omega_{n,j}=\omega_{n,j}^{-\alpha}$).

In this section, we mainly focus on the discretized fractional substantial calculus; 
for simplicity, the following notations are used:
\begin{equation}\label{4.10}
\begin{split}
E_{s,h}^\alpha=\Omega_{s,h}^\alpha-I_s^\alpha,~~{\rm where}~~\Omega_{s,h}^\alpha f(x)= h^\alpha \sum _{j=0}^n\kappa_{n-j}f(jh), ~~(x=nh),~~\alpha \in \mathbb{R},
\end{split}
\end{equation}
and it is easy to get the following properties for $\alpha \in \mathbb{R}$:
\begin{equation}\label{4.11}
\begin{split}
 &\left( I_s^\alpha [e^{-\sigma t}f(t)]\right)(x)=e^{-\sigma x}\left(I^\alpha [f(t)]\right)(x);\\
& \left(E_{s,h}^\alpha [e^{-\sigma t}f(t)]\right) (x)=e^{-\sigma x} \left( E_{h}^\alpha [f(t)]\right)(x),
 \end{split}
\end{equation}
where $\alpha>0$ corresponds to fractional substantial integral  and  $\alpha<0$ corresponds to fractional substantial derivative.

So we consider the following scheme to approximate the fractional substantial integral (\ref{4.1}) or the fractional substantial derivative (\ref{4.2})
\begin{equation}\label{4.12}
\begin{split}
  I_{s,h}^\alpha f(x)=&h^\alpha \sum _{j=0}^n\kappa_{n-j}f(jh)+h^\alpha\sum_{j=1}^r\kappa_{n,j}f(jh)),~~(x=nh),~~\alpha \in \mathbb{R},
\end{split}
\end{equation}
where
\begin{equation}\label{4.13}
  \kappa_j=e^{-j\sigma h}\omega_j, ~~\omega_j \mbox{ is defined by (\ref{4.9})},
\end{equation}
and $\kappa_j$  and $\kappa_{n,j}$ also denote the convolution quadrature weights  and the starting quadrature weights, respectively.

Given a sequence $\kappa=(\kappa_n)_0^\infty$ (or $\omega=(\omega_n)_0^\infty$) and take \cite{Lubich:86}
$$\kappa(\zeta)=\sum_{n=0}^{\infty}\kappa_n \zeta^n,~~~~ \Big( \mbox{or}~~~ \omega(\zeta)=\sum_{n=0}^{\infty}\omega_n \zeta^n\Big),$$
to be its generating power series.  
\begin{definition}\label{definition4.1}
A convolution quadrature $\kappa$ is stable (for $I_s^\alpha$) if
$$\kappa_n=\mathcal{O}(n^{\alpha-1}).$$
\end{definition}
\begin{definition}\label{definition4.2}
A convolution quadrature $\kappa$ is consistent of order $p$ (for $I_s^\alpha$) if
$$h^\alpha \kappa\left(e^{\sigma h}e^{-h}\right)=1+\mathcal{O}(h^{p}).$$
\end{definition}
\begin{definition}\label{definition4.3}
A convolution quadrature $\kappa$ is convergent of order $p$ (to $I_s^\alpha$) if
\begin{equation}\label{4.14}
  (E_{s,h}^\alpha [e^{-\sigma t}t^{\beta-1}])(1)=\mathcal{O}(h^{\beta})+\mathcal{O}(h^{p})~~{\rm for~ all}~~\beta \in  \mathbb{C},\beta\neq 0,-1,-2,\cdots.
\end{equation}
\end{definition}
\begin{lemma} \label {lemma4.4}
If  $ (E_{s,h}^\alpha [e^{-\sigma t}t^{k-1}])(1)=\mathcal{O}(h^{k})+\mathcal{O}(h^{p})$ for $k=1,2,3,\ldots,$
then $\kappa$ is consistent of order $p$. 
Moreover, $\kappa$ is consistent of order $p$ if and only if $\omega$ is consistent of order $p$.
\end{lemma}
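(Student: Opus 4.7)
The strategy is to reduce the statement to the classical $\sigma=0$ case of \cite{Lubich:86} by means of the two conjugation identities (\ref{4.11}) and (\ref{4.13}), which tie the substantial weights $\kappa$ and the operator $E_{s,h}^\alpha$ back to their Riemann--Liouville counterparts $\omega$ and $E_h^\alpha$.

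I would first dispose of the ``moreover'' clause, which is essentially bookkeeping. Relation (\ref{4.13}) states $\kappa_n = e^{-n\sigma h}\omega_n$, so at the level of generating functions $\kappa(\zeta) = \omega(e^{-\sigma h}\zeta)$. Substituting $\zeta = e^{\sigma h}e^{-h}$ gives $\kappa(e^{\sigma h}e^{-h}) = \omega(e^{-h})$, and multiplying by $h^\alpha$ yields $h^\alpha\kappa(e^{\sigma h}e^{-h}) = h^\alpha\omega(e^{-h})$. Comparing with Definition \ref{definition4.2}, $\kappa$ is consistent of order $p$ if and only if $\omega$ is.

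For the main implication I would invoke the second identity of (\ref{4.11}) at $x = 1$, which reads
\begin{equation*}
\bigl(E_{s,h}^\alpha[e^{-\sigma t}t^{k-1}]\bigr)(1) \;=\; e^{-\sigma}\,\bigl(E_h^\alpha t^{k-1}\bigr)(1).
\end{equation*}
Since $e^{-\sigma}$ is a nonzero constant independent of $h$, the assumed bound $\mathcal{O}(h^k) + \mathcal{O}(h^p)$ transfers verbatim to $(E_h^\alpha t^{k-1})(1)$ for every $k = 1, 2, 3, \ldots$. This is precisely the hypothesis of the $\sigma=0$ analogue of our lemma, namely the classical statement in \cite{Lubich:86} for the Riemann--Liouville convolution quadrature $\omega$, from which I would conclude $h^\alpha\omega(e^{-h}) = 1 + \mathcal{O}(h^p)$. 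Combined with the ``moreover'' part already proved, this yields $h^\alpha\kappa(e^{\sigma h}e^{-h}) = 1 + \mathcal{O}(h^p)$, i.e.\ $\kappa$ is consistent of order $p$ in the sense of Definition \ref{definition4.2}.

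The genuinely delicate step is this invocation of the $\sigma=0$ statement: the hypothesis only controls the quadrature on integer monomials at the single point $x = 1$, and from these discrete moment conditions one must recover the analytic behavior of the symbol $h^\alpha\omega(e^{-h})$ as a function of $h$ near $h=0$. This upgrade goes through an expansion of $\omega(\zeta)$ about $\zeta = 1$ and a matching of the resulting Taylor coefficients to those of the constant $1$; rather than reproduce it, I would cite \cite{Lubich:86} directly.
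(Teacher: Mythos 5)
Your argument is correct and follows essentially the same route as the paper: reduce to the $\sigma=0$ case via the identity $(E_{s,h}^\alpha[e^{-\sigma t}t^{k-1}])(1)=e^{-\sigma}(E_h^\alpha t^{k-1})(1)$ from (\ref{4.11}), invoke Lemma 3.1 of \cite{Lubich:86} to get $h^\alpha\omega(e^{-h})=1+\mathcal{O}(h^p)$, and then use $\kappa(\zeta)=\omega(e^{-\sigma h}\zeta)$ from (\ref{4.13}) to transfer consistency between $\kappa$ and $\omega$. The only difference is the order in which you handle the two clauses, which is immaterial.
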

\begin{proof}
According to (\ref{4.11}), we have
$$(E_{s,h}^\alpha [e^{-\sigma t}t^{k-1}])(1)=e^{-\sigma }(E_{h}^\alpha t^{k-1})(1),  $$
and it leads to
$$(E_{h}^\alpha t^{k-1})(1)=\mathcal{O}(h^{k})+\mathcal{O}(h^{p}),~~ {\rm for}~~ k=1,2,3,\ldots.$$
Then from Lemma 3.1 of \cite{Lubich:86}, we obtain
\begin{equation*}
 h^\alpha \omega (e^{-h})=1+\mathcal{O}(h^{p}), ~{\rm with }~~
  \omega(\zeta)=\sum_{n=0}^\infty \omega_n\zeta^n.
\end{equation*}
Using (\ref{4.13}), there exists
\begin{equation}\label{4.15}
\kappa(\zeta)=\sum_{n=0}^\infty \kappa_n\zeta^n =\sum_{n=0}^\infty  e^{-n\sigma h}\omega_n\zeta^n =  \omega  \left (\frac{\zeta}{e^{\sigma h}}\right).
 \end{equation}
Therefore
\begin{equation*}
 h^\alpha \kappa\left(e^{\sigma h}e^{-h}\right)= h^\alpha \omega (e^{-h})=1+\mathcal{O}(h^{p}),
\end{equation*}
and it means that $\kappa$ is consistent of order $p$ if and only if $\omega$ is consistent of order $p$.
\end{proof}

Using  (3.6) of \cite{Lubich:86} and (\ref{4.15}), we get
\begin{equation}\label{4.16}
\begin{split}
 \kappa(\zeta)= \omega  \left (\frac{\zeta}{e^{\sigma h}}\right)
 =&   \left(1-\frac{\zeta}{e^{\sigma h}}\right)^{-\alpha} \Big[c_0+c_1\left(1-\frac{\zeta}{e^{\sigma h}}\right)+c_2\left(1-\frac{\zeta}{e^{\sigma h}}\right)^2+ \cdots \\
  & +c_{N-1}\left(1-\frac{\zeta}{e^{\sigma h}}\right)^{N-1}+\left(1-\frac{\zeta}{e^{\sigma h}}\right)^{N}\widetilde{r}\left(\frac{\zeta}{e^{\sigma h}}\right)\Big],
 \end{split}
 \end{equation}
and
\begin{equation*}
\begin{split}
 \kappa(\zeta)= \omega  \left (\frac{\zeta}{e^{\sigma h}}\right)
 =&   \left(1-\frac{\zeta}{e^{\sigma h}}\right)^{-\alpha} \widetilde{\omega}\left(\frac{\zeta}{e^{\sigma h}}\right).
 \end{split}
 \end{equation*}
Therefore, we can characterize consistency in terms of the coefficients $c_i$.
\begin{lemma} \label {lemma4.5}
Let  $\sum \limits_{i=0}^\infty \gamma_i(1-\zeta)^i=\left(-\frac{\ln \zeta}{1-\zeta}\right)^{-\alpha}$.
Then
$\kappa$ is consistent of order $p$ 
if and only if the coefficients $c_i$ in (\ref{4.16}) satisfy
$$c_i=\gamma_i~~{\rm for }~~i=0,1,\ldots,p-1.$$
\end{lemma}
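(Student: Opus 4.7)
The plan is to reduce this to the analogous (known) statement for the ordinary convolution quadrature $\omega$, exploiting the clean relation $\kappa(\zeta)=\omega(\zeta/e^{\sigma h})$ from (\ref{4.15}) together with the consistency criterion $h^\alpha\kappa(e^{\sigma h}e^{-h})=1+\mathcal{O}(h^p)$ from Definition \ref{definition4.2} and Lemma \ref{lemma4.4}. The substitution $\zeta=e^{\sigma h}e^{-h}$ is engineered precisely so that the factor $e^{\sigma h}$ in (\ref{4.16}) disappears: $\zeta/e^{\sigma h}=e^{-h}$ and hence $1-\zeta/e^{\sigma h}=1-e^{-h}$. This converts the consistency condition into a condition purely in the real variable $h$, with no $\sigma$ remaining.

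After that substitution, (\ref{4.16}) gives
\begin{equation*}
h^\alpha\kappa\!\left(e^{\sigma h}e^{-h}\right)
= \left(\frac{h}{1-e^{-h}}\right)^{\alpha}\!\Bigl[c_0+c_1(1-e^{-h})+\cdots+c_{N-1}(1-e^{-h})^{N-1}+(1-e^{-h})^N\widetilde r(e^{-h})\Bigr].
\end{equation*}
I would then rewrite the prefactor using the identity $h/(1-e^{-h})=-\ln\zeta/(1-\zeta)$ evaluated at $\zeta=e^{-h}$. Consistency of order $p$ is therefore equivalent to
\begin{equation*}
\sum_{i\ge 0}c_i(1-e^{-h})^i \;=\; \left(\frac{1-e^{-h}}{h}\right)^{\alpha}+\mathcal{O}(h^p)
\;=\; \left(-\frac{\ln\zeta}{1-\zeta}\right)^{-\alpha}\bigg|_{\zeta=e^{-h}}+\mathcal{O}(h^p),
\end{equation*}
and by the definition of $\gamma_i$ the right-hand side equals $\sum_{i\ge 0}\gamma_i(1-e^{-h})^i+\mathcal{O}(h^p)$.

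To conclude, I would expand $1-e^{-h}=h+\mathcal{O}(h^2)$ and use the invertibility of the (formal) change of variable between the power series in $h$ and the power series in $1-e^{-h}$: since $1-e^{-h}$ is an analytic function of $h$ with nonzero derivative at $0$, matching the two expansions up to order $h^{p-1}$ is equivalent to matching coefficients of $(1-e^{-h})^i$ for $i=0,1,\dots,p-1$. This yields $c_i=\gamma_i$ for $i=0,1,\dots,p-1$, and the converse is immediate by substituting these values back.

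No step is genuinely hard here; the only subtlety is the careful bookkeeping in the change of variable $h\leftrightarrow(1-e^{-h})$, where one must verify that the $\mathcal{O}(h^p)$ error in the consistency definition matches an $\mathcal{O}((1-e^{-h})^p)$ error in the $(1-\zeta)$-expansion, so that truncating at index $p-1$ is the correct cutoff. This is routine once one notes $(1-e^{-h})^p=h^p(1+\mathcal{O}(h))$, but it is the place where one should be explicit to avoid off-by-one confusion.
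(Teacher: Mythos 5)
Your argument is correct and follows essentially the same route as the paper: the substitution $\zeta=e^{\sigma h}e^{-h}$, which turns $1-\zeta/e^{\sigma h}$ into $1-e^{-h}$, is exactly the content of the paper's reduction of the consistency of $\kappa$ to that of $\omega$ (Lemma \ref{lemma4.4}), after which the paper simply cites Lubich's Lemma 3.2 for the coefficient matching against $\left(-\ln\zeta/(1-\zeta)\right)^{-\alpha}$. You merely unpack that citation explicitly, including the change of variable $h\leftrightarrow 1-e^{-h}$, which the paper leaves to the reference.
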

\begin{proof}
From  Lemma \ref{lemma4.4}, it implies that $\kappa$ is consistent of order $p$ if and only if $\omega$ is consistent of order $p$.
Thus, using Lemma 3.2 of \cite{Lubich:86}, the desired result is obtained. 
\end{proof}

Whether the method $\kappa$ is stable depends on the remainder in the expansion (\ref{4.16}), and (\ref{4.16})
can be rewritten as
\begin{equation}\label{4.17}
\begin{split}
 \kappa(\zeta)=
  & \left(1-\frac{\zeta}{e^{\sigma h}}\right)^{-\alpha} \Big[c_0+c_1\left(1-\frac{\zeta}{e^{\sigma h}}\right)+ \cdots +c_{N-1}\left(1-\frac{\zeta}{e^{\sigma h}}\right)^{N-1}\Big]\\
  & +\left(1-\frac{\zeta}{e^{\sigma h}}\right)^{N}r\left(\frac{\zeta}{e^{\sigma h}}\right),
 \end{split}
 \end{equation}
where $r(\zeta)= \left(1-\zeta\right)^{-\alpha}\widetilde{r}\left(\zeta\right)$.

\begin{lemma} \label {lemma4.6}
$\kappa$ is stable if and only if $\omega$ is stable; and $\omega$ is stable if and only if  the coefficients $r_n$ of $r(\zeta)$ in (\ref{4.17}) satisfy
\begin{equation*}
  r_n=\mathcal{O}(n^{\alpha-1}).
\end{equation*}
\end{lemma}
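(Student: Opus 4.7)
The proof naturally splits into the two equivalences stated. For the first, $\kappa$ stable $\iff$ $\omega$ stable, I would simply appeal to the coefficient relation (\ref{4.13}), namely $\kappa_n = e^{-n\sigma h}\omega_n$. On a bounded interval $x = nh \in [0,T]$ we have $n \le T/h$, so the factor $e^{-n\sigma h}$ is sandwiched between $e^{-T|\sigma|}$ and $e^{T|\sigma|}$ (with $\sigma$ real, or similarly in the complex case). These are constants independent of $n$ on the relevant range, so $\kappa_n = \mathcal{O}(n^{\alpha-1})$ holds if and only if $\omega_n = \mathcal{O}(n^{\alpha-1})$.

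For the second equivalence, my plan is to pull the decomposition (\ref{4.17}) back to $\omega$ via the change of variables $\tilde{\zeta} = \zeta/e^{\sigma h}$. Using (\ref{4.15}), $\omega(\tilde{\zeta}) = \kappa(e^{\sigma h}\tilde{\zeta})$, and substituting into (\ref{4.17}) yields
\begin{equation*}
\omega(\tilde{\zeta}) = (1-\tilde{\zeta})^{-\alpha}\Bigl[c_0 + c_1(1-\tilde{\zeta}) + \cdots + c_{N-1}(1-\tilde{\zeta})^{N-1}\Bigr] + (1-\tilde{\zeta})^N r(\tilde{\zeta}),
\end{equation*}
which is precisely the canonical decomposition (3.7) of Lubich 1986, with the \emph{same} remainder coefficients $r_n$. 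One then invokes Lemma 3.3 of \cite{Lubich:86} directly to conclude that $\omega_n = \mathcal{O}(n^{\alpha-1})$ if and only if $r_n = \mathcal{O}(n^{\alpha-1})$.

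The main obstacle is therefore not in our two-step reduction (which is essentially bookkeeping with (\ref{4.13}) and (\ref{4.15})), but rather in the reverse direction of Lubich's Lemma 3.3. The forward direction is transparent: the coefficients of $(1-\tilde{\zeta})^{j-\alpha}$ for $0 \le j \le N-1$ behave asymptotically like $\mathcal{O}(n^{\alpha-j-1}) = \mathcal{O}(n^{\alpha-1})$ by standard binomial asymptotics, and multiplication of $r(\tilde{\zeta})$ by $(1-\tilde{\zeta})^N$ is a fixed finite difference that preserves the polynomial order of its coefficients. The reverse direction requires inverting the factor $(1-\tilde{\zeta})^N$, whose series coefficients are only $\mathcal{O}(n^{N-1})$, so a naive convolution estimate is insufficient; Lubich bypasses this by exploiting analyticity of $r$ in a disc strictly larger than the unit disc together with a contour-integral representation of its coefficients. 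Since this analytic lemma is already established in \cite{Lubich:86}, our proof simply quotes it after the change of variables above.
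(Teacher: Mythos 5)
Your proposal is correct and follows essentially the same route as the paper: the first equivalence via the relation $\kappa_n = e^{-n\sigma h}\omega_n$ with the exponential factor bounded between constants on a bounded interval, and the second by reducing (\ref{4.17}) to Lubich's canonical decomposition for $\omega$ (with the same $r_n$) and citing Lemma 3.3 of \cite{Lubich:86}. The extra detail you give on the change of variables $\tilde\zeta = \zeta/e^{\sigma h}$ and on the internals of Lubich's lemma is sound but goes beyond what the paper writes out.
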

\begin{proof}
By Lemma 3.3 of \cite{Lubich:86}, we have $\omega$ is stable if and only if $r_n=\mathcal{O}(n^{\alpha-1})$.
 From (\ref{4.13}) and $e^{-j\sigma h}\in [e^{-{|\sigma|\, x}},e^{|\sigma|\, x}]$, $j=0,1,\ldots,n, x=nh$, it implies that
$\kappa$ is stable if and only if $\omega$ is stable.
\end{proof}

\begin{lemma} \label {lemma4.7}
Convergence implies stability. Moreover, $\kappa$ is convergent of order $p$ if and only if $\omega$ is convergent of order $p$.
\end{lemma}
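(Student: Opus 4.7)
The plan is to reduce both assertions to the corresponding facts for the ordinary convolution quadrature $\omega$ (already established by Lubich) by means of the conjugation identity \eqref{4.11}, together with Lemma \ref{lemma4.6}. The key observation is that the exponential weight $e^{-\sigma t}$ in the substantial setting can be stripped off by \eqref{4.11}, reducing everything to the Riemann--Liouville case.

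First I would handle the second sentence, the equivalence of convergence orders. Evaluating the second identity in \eqref{4.11} at $x=1$ with $f(t)=t^{\beta-1}$ gives
$$
(E_{s,h}^\alpha[e^{-\sigma t}t^{\beta-1}])(1)=e^{-\sigma}\,(E_{h}^\alpha t^{\beta-1})(1).
$$
Since $e^{-\sigma}$ is a nonzero constant independent of $h$, the estimate $(E_{s,h}^\alpha[e^{-\sigma t}t^{\beta-1}])(1)=\mathcal{O}(h^{\beta})+\mathcal{O}(h^{p})$ holds for a given $\beta\neq 0,-1,-2,\ldots$ if and only if $(E_{h}^\alpha t^{\beta-1})(1)=\mathcal{O}(h^{\beta})+\mathcal{O}(h^{p})$. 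Quantifying over all admissible $\beta$, Definition \ref{definition4.3} for $\kappa$ holds if and only if the corresponding Lubich definition for $\omega$ holds, proving the second sentence.

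For the first sentence, suppose $\kappa$ is convergent of order $p$. By the equivalence just established, $\omega$ is convergent of order $p$ in the sense of \cite{Lubich:86}. Lubich's theorem (Theorem 3.1 in \cite{Lubich:86}, the analogue of the present statement without the factor $e^{-\sigma t}$) then asserts that convergence implies stability for $\omega$, i.e.\ $\omega_n=\mathcal{O}(n^{\alpha-1})$. Finally, Lemma \ref{lemma4.6} transfers stability back from $\omega$ to $\kappa$, yielding $\kappa_n=\mathcal{O}(n^{\alpha-1})$ as required by Definition \ref{definition4.1}.

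I do not expect a real obstacle here, since the whole argument is a routine transfer through \eqref{4.11} and Lemma \ref{lemma4.6}; the only point deserving verification is that the constant $e^{-\sigma}$ appearing in \eqref{4.11} is genuinely independent of $h$ (so it does not pollute the $\mathcal{O}$-asymptotics) and that the identity \eqref{4.11} is valid for the complex-exponent test functions $t^{\beta-1}$ used in Definition \ref{definition4.3}, which follows because the derivation of \eqref{4.11} relies only on the multiplicative structure of the kernel and not on the particular form of $f$.
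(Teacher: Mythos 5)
Your proposal is correct and follows essentially the same route as the paper: the identity \eqref{4.11} evaluated at $x=1$ gives $(E_{s,h}^\alpha[e^{-\sigma t}t^{\beta-1}])(1)=e^{-\sigma}(E_{h}^\alpha t^{\beta-1})(1)$, which yields the equivalence of convergence orders, and the first assertion then follows from Lubich's ``convergence implies stability'' result for $\omega$ combined with Lemma \ref{lemma4.6}. The only cosmetic difference is the citation label for Lubich's result (the paper invokes his Lemma 3.4), and your explicit remark that $e^{-\sigma}$ is a nonzero $h$-independent constant is a worthwhile, if minor, addition to the paper's terser write-up.
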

\begin{proof}
According to (\ref{4.11}), we have
$$(E_{s,h}^\alpha [e^{-\sigma t}t^{\beta-1}])(1)=e^{-\sigma }(E_{h}^\alpha t^{\beta-1})(1), $$
and it implies that $\kappa$ is convergent of order $p$ if and only if $\omega$ is convergent of order $p$.
Hence, according to Lemma 3.4 of \cite{Lubich:86}, the desired result is got. 
\end{proof}

\begin{lemma} \label {lemma4.8}
Let $\alpha, \beta \in \mathbb{C}$, $\beta \neq 0,-1,-2,\cdots.$ If $\kappa$ is stable, then the convolution quadrature error of $e^{-\sigma t}t^{\beta-1}$
has the asymptotic expansion as 
\begin{equation*}
(E_{s,h}^\alpha [e^{-\sigma t}t^{\beta-1}])(1)=e^{-\sigma}\left(e_0+e_1h+\cdots+e_{N-1}h^{N-1}+\mathcal{O}(h^{N})+ \mathcal{O}(h^{\beta})  \right),
\end{equation*}
 and  the coefficients $e_j=e_j(\alpha,\beta,c_0,\cdots,c_j)$ depend analytically on $\alpha,\beta$ and the coefficients $c_0,\cdots,c_j$ of (\ref{4.17}).
\end{lemma}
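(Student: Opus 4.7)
The plan is to reduce the statement to the corresponding classical (non-substantial) asymptotic expansion of Lubich. First I would apply the second identity in \eqref{4.11} with $f(t)=t^{\beta-1}$ and $x=1$, which gives
$$(E_{s,h}^\alpha [e^{-\sigma t}t^{\beta-1}])(1) \;=\; e^{-\sigma}\,(E_{h}^\alpha t^{\beta-1})(1).$$
Thus the factor $e^{-\sigma}$ peels off cleanly and the entire task is reduced to exhibiting the asymptotic expansion of the ordinary convolution quadrature error on the monomial $t^{\beta-1}$.

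Next I would check that the hypotheses of the classical expansion (Theorem 2.4 of \cite{Lubich:86}) are in force for $\omega$. Stability of $\omega$ follows from stability of $\kappa$ via Lemma \ref{lemma4.6}. Moreover, the expansion coefficients $c_0,\dots,c_{N-1}$ appearing in \eqref{4.17} for $\kappa$ coincide with those in Lubich's expansion for $\omega$: indeed, from \eqref{4.15} we have $\kappa(\zeta)=\omega(\zeta/e^{\sigma h})$, and \eqref{4.17} is organized in powers of $(1-\zeta/e^{\sigma h})$, so under the substitution $\zeta\mapsto\zeta/e^{\sigma h}$ the coefficients $c_i$ read off directly as the same $c_i$ that govern $\omega$ in powers of $(1-\zeta)$. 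Invoking \cite[Thm.~2.4]{Lubich:86} then yields
$$(E_{h}^\alpha t^{\beta-1})(1)\;=\;e_0+e_1h+\cdots+e_{N-1}h^{N-1}+\mathcal{O}(h^{N})+\mathcal{O}(h^{\beta}),$$
with $e_j=e_j(\alpha,\beta,c_0,\dots,c_j)$ analytic in its arguments. Multiplying by $e^{-\sigma}$ produces the claimed expansion.

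The main obstacle, such as it is, is conceptual rather than computational: one has to be careful that rescaling $\zeta\mapsto\zeta/e^{\sigma h}$ does not introduce hidden $h$-dependence into the coefficients $c_i$. This is ensured precisely because \eqref{4.17} was set up in powers of $(1-\zeta/e^{\sigma h})$, so the $e^{-\sigma h}$ weighting is absorbed uniformly and the $c_i$ are genuinely independent of $h$. Once this alignment is verified, the substantial case inherits Lubich's expansion essentially verbatim, and no additional work on the remainder term is required beyond invoking stability through Lemma \ref{lemma4.6}.
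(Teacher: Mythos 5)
Your argument is essentially the paper's own proof: reduce via the identity \eqref{4.11} to $e^{-\sigma}(E_{h}^\alpha t^{\beta-1})(1)$, transfer stability from $\kappa$ to $\omega$ through Lemma \ref{lemma4.6}, and invoke the classical asymptotic expansion from Lubich. The only slip is the citation: the expansion you need is Lemma 3.5 of \cite{Lubich:86} (the expansion of the pure convolution error on monomials), not Theorem 2.4 (which concerns the full quadrature with starting weights); the substance of what you invoke is correct.
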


\begin{proof}
 From Lemma \ref {lemma4.6}, $\kappa$ is stable if and only if $\omega$ is stable. According to (\ref{4.11}) and Lemma 3.5 of \cite{Lubich:86}, we get
\begin{equation*}
\begin{split}
(E_{s,h}^\alpha [e^{-\sigma t}t^{\beta-1}])(1)&=e^{-\sigma}(E_{h}^\alpha t^{\beta-1})(1)\\
&=e^{-\sigma}\left(e_0+e_1h+\cdots+e_{N-1}h^{N-1}+\mathcal{O}(h^{N})+ \mathcal{O}(h^{\beta})  \right).
\end{split}
\end{equation*}
\end{proof}

\begin{lemma} \label {lemma4.9}
Let $\Re (\alpha)>0$.
If $(E_{s,h}^\alpha [e^{-\sigma t}t^{p-1}])(1)=\mathcal{O}(h^{p})$,
then $(E_{s,h}^\alpha [e^{-\sigma t}t^{\beta-1}])(1)=\mathcal{O}(h^{p})$ for all $\Re (\beta)>p$.
\end{lemma}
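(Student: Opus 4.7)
The strategy mirrors the one used throughout Section~4: reduce the statement to its Riemann--Liouville counterpart by means of the intertwining identity (\ref{4.11}), apply the corresponding classical result of Lubich, and then translate back.

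First, I would apply the second identity of (\ref{4.11}) with $f(t)=t^{p-1}$ to rewrite the hypothesis as
\[
(E_{s,h}^\alpha[e^{-\sigma t}t^{p-1}])(1)=e^{-\sigma}(E_{h}^\alpha t^{p-1})(1)=\mathcal{O}(h^p),
\]
which, since $e^{-\sigma}\neq 0$, is equivalent to $(E_{h}^\alpha t^{p-1})(1)=\mathcal{O}(h^p)$ for the classical Lubich error operator $E_h^\alpha$. This is exactly the hypothesis of Lemma~3.6 in \cite{Lubich:86}, whose conclusion gives $(E_{h}^\alpha t^{\beta-1})(1)=\mathcal{O}(h^p)$ for every $\beta$ with $\Re(\beta)>p$. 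Pulling the result back through (\ref{4.11}) once more then yields
\[
(E_{s,h}^\alpha[e^{-\sigma t}t^{\beta-1}])(1)=e^{-\sigma}(E_{h}^\alpha t^{\beta-1})(1)=\mathcal{O}(h^p),
\]
which is the desired conclusion.

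The main point to watch is that Lubich's Lemma~3.6 is applied through the asymptotic expansion of Lemma~\ref{lemma4.8} (the substantial analogue of Lubich's Lemma~3.5), which itself presupposes stability of the quadrature. In the present setting stability of $\kappa$ is equivalent to stability of $\omega$ by Lemma~\ref{lemma4.6}, so once Lubich's argument produces the conclusion for $\omega$, the same conclusion for $\kappa$ follows automatically via (\ref{4.11}). In this sense the ``hard part'' has already been done in Lemmas~\ref{lemma4.6}--\ref{lemma4.8}, and the present lemma is essentially a corollary obtained by the $e^{-\sigma t}$-twist.

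If one prefers a self-contained argument avoiding any direct citation of \cite{Lubich:86}, the same conclusion can be reached from the expansion
\[
(E_{s,h}^\alpha[e^{-\sigma t}t^{\beta-1}])(1)=e^{-\sigma}\bigl(e_0(\alpha,\beta)+e_1(\alpha,\beta)h+\cdots+e_{N-1}(\alpha,\beta)h^{N-1}+\mathcal{O}(h^{N})+\mathcal{O}(h^{\beta})\bigr)
\]
supplied by Lemma~\ref{lemma4.8}: the hypothesis forces $e_j(\alpha,p)=0$ for $0\le j<p$, and since each $e_j(\alpha,\beta)$ depends analytically on $\beta$ (as asserted in that lemma), a standard argument using analyticity identifies the vanishing of these coefficients for $\Re(\beta)>p$ with cancellation up to the order $h^p$ threshold, at which point the remaining terms are absorbed into either $\mathcal{O}(h^{p})$ or $\mathcal{O}(h^{\beta})\subseteq\mathcal{O}(h^{p})$. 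Either route gives the claim in a couple of lines once the earlier machinery of Section~4 is in place.
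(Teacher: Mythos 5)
Your proposal is correct and follows essentially the same route as the paper: the paper's proof also uses the intertwining identity (\ref{4.11}) to reduce the hypothesis to $(E_{h}^\alpha t^{p-1})(1)=\mathcal{O}(h^p)$, invokes Lemma~3.6 of \cite{Lubich:86}, and translates back via (\ref{4.11}). The extra remarks about stability and the analytic-coefficient alternative are not needed but do not detract from the argument.
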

\begin{proof}
 According to (\ref{4.11}), it leads to
 $$(E_{s,h}^\alpha [e^{\sigma t}t^{p-1}])(1)=e^{-\sigma}(E_{h}^\alpha t^{p-1})(1)=\mathcal{O}(h^{p}).$$
Then form Lemma 3.6 of \cite{Lubich:86}, we obtain $(E_{h}^\alpha t^{\beta-1})(1)=\mathcal{O}(h^{p})$ for all $\Re (\beta)>p$. Using (\ref{4.11}) again, there exists $(E_{s,h}^\alpha [e^{-\sigma t}t^{\beta-1}])(1)=\mathcal{O}(h^{p})$ for all $\Re (\beta)>p$.
\end{proof}

\begin{lemma} \label {lemma4.10}
Let $\Re (\alpha)>0$. There exist $\widetilde{\gamma}_0,\widetilde{\gamma}_1,\cdots$ (independent of $\kappa$) such that the following  holds for stable $\kappa$:
\begin{equation*}
(E_{s,h}^\alpha [e^{-\sigma t}t^{q-1}])(1)=\mathcal{O}(h^{q}), ~~{\rm for }~~ q=1,2,\cdots, p,
\end{equation*}
if and only if $c_i$ of (\ref{4.17}) satisfy
\begin{equation*}
  c_i=\widetilde{\gamma}_i, ~~{\rm for }~~ i=0,1,\cdots, p-1.
\end{equation*}
\end{lemma}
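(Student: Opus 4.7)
The plan is to reduce the lemma directly to its classical counterpart in \cite{Lubich:86} (the analogous Lemma~3.7 there) by exploiting the conjugation identity (\ref{4.11}), and then to translate the resulting coefficient characterization from $\omega$ to $\kappa$ via the substitution $\zeta \mapsto \zeta/e^{\sigma h}$ that links them through (\ref{4.15}).

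First, I would apply (\ref{4.11}) to obtain
$$(E_{s,h}^\alpha [e^{-\sigma t} t^{q-1}])(1) = e^{-\sigma}(E_h^\alpha t^{q-1})(1).$$
Since $e^{-\sigma}$ is a nonzero constant independent of $h$, the block of conditions $(E_{s,h}^\alpha [e^{-\sigma t} t^{q-1}])(1)=\mathcal{O}(h^q)$ for $q=1,\ldots,p$ is equivalent to $(E_h^\alpha t^{q-1})(1)=\mathcal{O}(h^q)$ for $q=1,\ldots,p$. Next, by Lemma~\ref{lemma4.6}, $\kappa$ is stable if and only if $\omega$ is stable, so I may invoke Lemma~3.7 of \cite{Lubich:86} on the stable convolution quadrature $\omega$: there exist universal constants $\widetilde{\gamma}_0,\widetilde{\gamma}_1,\ldots$ (depending only on $\alpha$, not on $\omega$) such that the family of asymptotic identities above is equivalent to the expansion coefficients of $\omega$, written in the form $\omega(\eta)=(1-\eta)^{-\alpha}[c_0+c_1(1-\eta)+\cdots]$, satisfying $c_i=\widetilde{\gamma}_i$ for $i=0,1,\ldots,p-1$.

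Second, I would identify these classical coefficients with the $c_i$ of (\ref{4.17}). From (\ref{4.15}) we have $\kappa(\zeta)=\omega(\zeta/e^{\sigma h})$, and substituting $\eta=\zeta/e^{\sigma h}$ in the classical expansion of $\omega$ gives exactly the expansion (\ref{4.16}) for $\kappa$; because the substitution is the same affine change of variable applied term by term to powers of $(1-\eta)$, the scalar coefficients multiplying $(1-\zeta/e^{\sigma h})^i$ coincide with the coefficients multiplying $(1-\eta)^i$ in $\omega$. Therefore the condition $c_i=\widetilde{\gamma}_i$ obtained from the classical result is precisely the condition stated in the present lemma for $\kappa$.

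The whole argument is a conjugation-and-translation recipe: the identity (\ref{4.11}) conjugates the substantial error operator to the non-substantial one, Lemma~\ref{lemma4.6} transfers stability, and (\ref{4.15}) transfers coefficients. The only genuine subtlety is the coefficient identification in the last step, since one must check that the substitution $\eta\mapsto\zeta/e^{\sigma h}$ does not mix the $c_i$ with each other — but because the expansion is in powers of $(1-\eta)$ and this maps to $(1-\zeta/e^{\sigma h})$ under the substitution without interaction between different powers, no such mixing occurs. Consequently the $\widetilde{\gamma}_i$ supplied by the classical lemma serve verbatim in the present statement, and their independence of $\kappa$ follows from their independence of $\omega$.
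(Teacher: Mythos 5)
Your proposal is correct and follows essentially the same route as the paper: conjugate via (\ref{4.11}), transfer stability via Lemma \ref{lemma4.6}, and invoke Lemma 3.7 of \cite{Lubich:86}. Your explicit check that the substitution $\eta\mapsto\zeta/e^{\sigma h}$ identifies the $c_i$ of (\ref{4.17}) with the classical coefficients without mixing is a detail the paper leaves implicit, but the argument is the same.
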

\begin{proof}
From Lemma \ref {lemma4.6}, $\kappa$ is stable if and only if $\omega$ is stable.
From (\ref{4.11}) and  Lemma 3.7 of \cite{Lubich:86}, there eixsts
 $$(E_{s,h}^\alpha [e^{-\sigma t}t^{q-1}])(1)=e^{-\sigma}(E_{h}^\alpha t^{q-1})(1)=\mathcal{O}(h^{q}),$$
 if and only if
  $$(E_{h}^\alpha t^{q-1})(1)=\mathcal{O}(h^{q}),~~{\rm for }~~ q=1,2,\cdots, p$$
  if and only if the coefficients $c_i$ of (\ref{4.17}) satisfy
\begin{equation*}
  c_i=\widetilde{\gamma}_i, ~~{\rm for }~~i=0,1,\cdots, p-1.
\end{equation*}

\end{proof}

\begin{lemma} \label {lemma4.11}
Let $\alpha \in \mathbb{R}$. $\kappa$ is convergent of order $p$,  if it is stable and consistent of order $p$.
\end{lemma}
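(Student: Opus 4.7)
The plan is to reduce the statement to the analogous classical result of Lubich for the Riemann--Liouville case by exploiting the exponential shift identity (\ref{4.15}), namely $\kappa(\zeta)=\omega(\zeta/e^{\sigma h})$, together with the factorization (\ref{4.11}) of the error functional, which gives $(E_{s,h}^\alpha[e^{-\sigma t}f(t)])(1)=e^{-\sigma}(E_{h}^\alpha f(t))(1)$. Both identities make each of the three properties, stability, consistency and convergence, pass back and forth between $\kappa$ and $\omega$ without change, so the substantial statement must inherit the truth of its classical counterpart.

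Concretely, I would proceed in three short steps. First, by Lemma \ref{lemma4.6}, stability of $\kappa$ is equivalent to stability of $\omega$; by Lemma \ref{lemma4.4}, consistency of $\kappa$ of order $p$ is equivalent to consistency of $\omega$ of order $p$. Thus the hypotheses transfer and $\omega$ is a stable, order-$p$ consistent convolution quadrature for $I^\alpha$. Second, I would invoke the classical result from Lubich \cite{Lubich:86} (Theorem 2.4 there, which is the Riemann--Liouville version of the present lemma) to conclude that $\omega$ is convergent of order $p$, i.e., $(E_{h}^\alpha t^{\beta-1})(1)=\mathcal{O}(h^{\beta})+\mathcal{O}(h^{p})$ for every admissible $\beta$. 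Third, Lemma \ref{lemma4.7} (or the identity (\ref{4.11}) applied once more) transfers the conclusion: $(E_{s,h}^\alpha [e^{-\sigma t}t^{\beta-1}])(1)=e^{-\sigma}(E_{h}^\alpha t^{\beta-1})(1)=\mathcal{O}(h^{\beta})+\mathcal{O}(h^{p})$, which is exactly the convergence condition of Definition \ref{definition4.3}.

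An alternative, self-contained route — useful as a sanity check — is to combine Lemmas \ref{lemma4.8} and \ref{lemma4.10} directly. Stability yields the asymptotic expansion $(E_{s,h}^\alpha[e^{-\sigma t}t^{\beta-1}])(1)=e^{-\sigma}\bigl(e_0+e_1 h+\cdots+e_{p-1}h^{p-1}+\mathcal{O}(h^p)+\mathcal{O}(h^\beta)\bigr)$ with $e_j$ depending analytically on $c_0,\ldots,c_j$; consistency of order $p$ forces, via Lemma \ref{lemma4.10}, the matching $c_i=\widetilde{\gamma}_i$ for $i<p$, which is precisely the condition under which $e_0=\cdots=e_{p-1}=0$ in the universal situation, and hence here as well. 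I do not anticipate a genuine obstacle: the preparatory lemmas in Section 4 have already absorbed all the analytical content, and what remains is bookkeeping to check that the reductions are bidirectional and that the test class $e^{-\sigma t}t^{\beta-1}$ used in Definition \ref{definition4.3} is exactly the class produced by the chain of reductions. The only mildly delicate point is to verify that the universality of the coefficients $\widetilde{\gamma}_i$ (independence from $\kappa$) is preserved under the exponential rescaling, which is immediate from (\ref{4.15}).
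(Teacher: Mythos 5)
Your proposal is correct and follows essentially the same route as the paper: transfer stability and consistency from $\kappa$ to $\omega$ via Lemmas \ref{lemma4.6} and \ref{lemma4.4}, invoke Lubich's classical result (the paper cites Lemma 3.8 of \cite{Lubich:86} rather than Theorem 2.4, which is the correct reference for ``stable $+$ consistent $\Rightarrow$ convergent''), and transfer convergence back via Lemma \ref{lemma4.7}. The alternative route you sketch via Lemmas \ref{lemma4.8} and \ref{lemma4.10} is sound but is not needed; it essentially reproduces Lubich's own proof of his Lemma 3.8.
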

\begin{proof}
According to Lemmas \ref{lemma4.6} and \ref{lemma4.4},  $\kappa$ is stable and consistent of order $p$ if and only if
 $\omega$ is stable and consistent of order $p$. Then from Lemma 3.8 of \cite{Lubich:86}, $\omega$  is  convergent of order $p$,
and  it leads to that $\kappa$ is also convergent of order $p$ by Lemmas \ref{lemma4.7}.
\end{proof}
\begin{theorem}\label {theorem4.12}
$\kappa$ is stable and consistent of order $p$ if and only if it is convergent of order $p$.
\end{theorem}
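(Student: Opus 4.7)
The strategy is to reduce the theorem to the corresponding classical equivalence for the Riemann--Liouville convolution quadrature $\omega$, which is already available from \cite{Lubich:86}, and then to transfer the statement back to $\kappa$ using the bridging lemmas built up in Lemmas \ref{lemma4.4}, \ref{lemma4.6}, and \ref{lemma4.7}. The key algebraic observation behind these lemmas is the factorization $\kappa(\zeta)=\omega(\zeta/e^{\sigma h})$ in (\ref{4.15}), together with the conjugation identity (\ref{4.11}) relating $E_{s,h}^\alpha$ and $E_{h}^\alpha$ through the weight $e^{-\sigma t}$.

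First I would dispose of the easy direction: if $\kappa$ is stable and consistent of order $p$, then Lemma \ref{lemma4.11} already gives that $\kappa$ is convergent of order $p$, so nothing further is needed here.

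For the converse, suppose $\kappa$ is convergent of order $p$. Stability is immediate from the first clause of Lemma \ref{lemma4.7} (``convergence implies stability''). It remains to verify that $\kappa$ is consistent of order $p$. The plan is to pass to $\omega$, apply the classical scalar result, and then pass back:
\begin{enumerate}
\item By the second clause of Lemma \ref{lemma4.7}, convergence of order $p$ of $\kappa$ is equivalent to convergence of order $p$ of $\omega$, so $\omega$ is convergent of order $p$.
\item Invoking the classical equivalence theorem for $\omega$ (Theorem 3.1 of \cite{Lubich:86}), convergence of order $p$ of $\omega$ implies, in particular, consistency of order $p$ of $\omega$, i.e.\ $h^\alpha \omega(e^{-h}) = 1 + \mathcal{O}(h^p)$.
\item Finally, by Lemma \ref{lemma4.4}, consistency of order $p$ of $\omega$ is equivalent to consistency of order $p$ of $\kappa$; indeed, substituting $\zeta = e^{\sigma h}e^{-h}$ into (\ref{4.15}) gives $\kappa(e^{\sigma h}e^{-h}) = \omega(e^{-h})$, so the two consistency conditions coincide.
\end{enumerate}
Combining these steps yields that $\kappa$ is both stable and consistent of order $p$, completing the only-if direction.

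The potentially delicate step is Step~1: ensuring that the translation identity $(E_{s,h}^\alpha[e^{-\sigma t} t^{\beta-1}])(1) = e^{-\sigma}(E_{h}^\alpha t^{\beta-1})(1)$ in (\ref{4.11}) genuinely preserves Definition \ref{definition4.3} with the \emph{same} order $p$ and the \emph{same} range of admissible $\beta\in\mathbb{C}\setminus\{0,-1,-2,\ldots\}$. Since this identity already underlies the proof of Lemma \ref{lemma4.7}, no new analytic work is required here; the main care is simply to keep track that the exponential factor $e^{-\sigma}$ is a harmless nonzero constant which cannot affect the asymptotic order. Once this transfer principle is accepted, the theorem is an immediate corollary of the classical Lubich equivalence and Lemma \ref{lemma4.11}.
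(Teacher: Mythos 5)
Your proposal is correct and follows essentially the same route as the paper, whose entire proof is the one-line citation of Lemmas \ref{lemma4.4}, \ref{lemma4.7}, and \ref{lemma4.11}; you have merely spelled out the reduction to the Riemann--Liouville quadrature $\omega$ and Lubich's classical equivalence that those lemmas encapsulate. The only cosmetic difference is that for the consistency half of the converse you detour through $\omega$ and Lubich's theorem, whereas one can apply the first clause of Lemma \ref{lemma4.4} directly (convergence gives the hypothesis $(E_{s,h}^\alpha[e^{-\sigma t}t^{k-1}])(1)=\mathcal{O}(h^{k})+\mathcal{O}(h^{p})$ for integer $k$); both paths are valid.
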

\begin{proof}
  From lemmas \ref{lemma4.4}, \ref{lemma4.7} and  \ref{lemma4.11}, we obtain it.
\end{proof}
\begin{theorem}\label {theorem4.13}
Let $\kappa$ satisfy (\ref{4.14}), and $f(x)=x^{\beta-1}g(x)$, where $\beta\neq 0,-1,-2,\cdots$, for $\alpha \ge 0$ and $\beta>\lceil-\alpha\rceil$ for $\alpha<0$; and $g(x)$ is sufficiently differentiable.
Then, there exists a starting quadrature $\kappa_{n,j}$, such that the approximation $I_{s,h}^\alpha f$ given by (\ref{4.12}) satisfies
 $$I_{s,h}^\alpha f(x)-I_s^\alpha f(x)=\mathcal{O}(h^p).$$
\end{theorem}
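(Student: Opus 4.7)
The plan is to reduce the substantial case to the classical Riemann--Liouville case via the conjugation identities in (\ref{4.11}), and then invoke Lubich's original starting-quadrature construction (the analogue of the earlier lemmas \ref{lemma4.8}--\ref{lemma4.11}, stated there for $\omega$) to obtain the weights $\kappa_{n,j}$. Concretely, I would write $f(t)=e^{-\sigma t}\tilde f(t)$ with
$$\tilde f(t):=e^{\sigma t}f(t)=t^{\beta-1}\tilde g(t),\qquad \tilde g(t):=e^{\sigma t}g(t).$$
Since $e^{\sigma t}$ is entire and $g$ is sufficiently differentiable by hypothesis, $\tilde g$ is also sufficiently differentiable, so $\tilde f$ has exactly the ``$t^{\beta-1}\cdot\text{smooth}$'' form to which Lubich's theory applies.

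Next I would produce starting weights $\omega_{n,j}$ for the Riemann--Liouville scheme applied to $\tilde f$. Taylor-expanding $\tilde g(t)=\sum_{j=0}^{N-1}a_j t^j+t^N R(t)$ with $N$ chosen so that $\Re(\beta)+N>p$, the resulting decomposition $\tilde f(t)=\sum_{j=0}^{N-1}a_j t^{\beta+j-1}+t^{\beta+N-1}R(t)$ splits the error into two pieces. For each singular monomial $t^{\beta+j-1}$, Lemma \ref{lemma4.8} (combined with Lemma \ref{lemma4.7} so that the stable $\kappa$ corresponds to a stable $\omega$) gives the asymptotic expansion of $(E_h^\alpha t^{\beta+j-1})(1)$, whose non-integer-order coefficients must be annihilated. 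The starting weights $\omega_{n,j}$ are chosen to do exactly this: they are determined by a finite linear system whose matrix has confluent Vandermonde structure and is invertible precisely under the non-resonance condition $\beta\notin\{0,-1,-2,\ldots\}$ (with the strengthened range $\Re(\beta)>\lceil -\alpha\rceil$ in the $\alpha<0$ case to keep $I^\alpha\tilde f$ bounded near the origin). For the smooth remainder $t^{\beta+N-1}R(t)$, stability plus consistency of order $p$ (Theorem \ref{theorem4.12}) already yields an $\mathcal{O}(h^p)$ contribution without any correction.

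Having thereby arranged $(I_h^\alpha\tilde f)(x)-(I^\alpha\tilde f)(x)=\mathcal{O}(h^p)$, I transfer back to the substantial setting by the exponential conjugation. Using $\tilde f(jh)=e^{\sigma jh}f(jh)$ and pulling an overall factor $e^{-\sigma nh}$ outside, the convolution weights transform via $e^{-\sigma nh}e^{\sigma jh}\omega_{n-j}=e^{-\sigma(n-j)h}\omega_{n-j}=\kappa_{n-j}$, matching (\ref{4.13}); the starting weights transform by the parallel rule
$$\kappa_{n,j}:=e^{-\sigma(n-j)h}\,\omega_{n,j}.$$
The first identity in (\ref{4.11}) gives $I_s^\alpha f(x)=e^{-\sigma x}I^\alpha\tilde f(x)$, and the same algebra makes the discrete scheme (\ref{4.12}) equal $e^{-\sigma x}$ times the Riemann--Liouville scheme applied to $\tilde f$. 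Therefore
$$I_{s,h}^\alpha f(x)-I_s^\alpha f(x)=e^{-\sigma x}\bigl(I_h^\alpha\tilde f(x)-I^\alpha\tilde f(x)\bigr)=\mathcal{O}(h^p),$$
since $e^{-\sigma x}$ is bounded on any fixed interval.

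The step I expect to be the main obstacle is not the reduction itself, which is essentially formal once (\ref{4.11}) is in hand, but rather the construction of the $\omega_{n,j}$: one must verify solvability of the starting-weight linear system, track the asymptotic expansion of Lemma \ref{lemma4.8} uniformly in $j$, and ensure that the sum of $N$ such corrections remains $\mathcal{O}(h^p)$. These technicalities are handled exactly as in Theorem 2.4 of \cite{Lubich:86}; the only novelty here is the bookkeeping of the exponential factors that relate $\kappa_j,\kappa_{n,j}$ to $\omega_j,\omega_{n,j}$.
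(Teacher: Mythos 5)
Your proposal is correct and follows essentially the same route as the paper: both reduce the claim to Lubich's Theorem 2.4 through the conjugation identity (\ref{4.11}), with the starting weights fixed by exactness on the singular terms $e^{-\sigma t}t^{q+\beta-1}$ (the paper's condition (\ref{4.18}) is exactly your system after multiplying through by $e^{-\sigma nh}$). The only difference is presentational --- the paper imposes the exactness conditions directly on the substantial scheme and expands $f$ via Lemma \ref{lemma2.7} before invoking Lubich, whereas you conjugate first and record the explicit weight relation $\kappa_{n,j}=e^{-\sigma(n-j)h}\,\omega_{n,j}$, which the paper leaves implicit.
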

\begin{proof}
 A suitable starting quadrature can be chosen by putting
 $$I_{s,h}^\alpha [e^{-\sigma t}t^{q+\beta-1}](x)-I_s^\alpha [e^{-\sigma t}t^{q+\beta-1}](x)=0, ~~q=0,1,\cdots, m-1,$$
 where $m$ satisfies $\Re(m+\beta-1) \leq p < \Re(m+\beta)$; then the following holds
\begin{equation}\label{4.18}
h^\alpha \sum_{j=1}^m \kappa_{n,j}e^{-\sigma jh}(jh)^{q+\beta-1}+(E_{s,h}^\alpha [e^{-\sigma t}t^{q+\beta-1}])(1)=0,~~  nh=1.
\end{equation}
According to (\ref{4.18}), we have 
\begin{equation*}
\begin{split}
& \sum_{j=1}^m \kappa_{n,j}e^{-\sigma jh}j^{q+\beta-1}
=\frac{\Gamma (q+\beta)}{\Gamma (\alpha+q+\beta)}e^{-\sigma nh}n^{q+\alpha+\beta-1} -\sum_{j=1}^n \kappa_{n-j}e^{-\sigma jh}j^{q+\beta-1},
 \end{split}
\end{equation*}
this gives a Vandermonde type system for $\kappa_{n,j}$.
From (\ref{4.14}) and (\ref{4.18}), we have
$$\sum_{j=1}^m \kappa_{n,j}e^{-\sigma jh}j^{q+\beta-1} =\mathcal{O}(n^{\alpha-1});$$
then
\begin{equation*}
 \kappa_{n,j} =\mathcal{O}(n^{\alpha-1}).
\end{equation*}

Let $f(x)=x^{\beta-1}g(x)=e^{-\sigma x} x^{\beta-1}h(x)$, where  $h(x)=e^{\sigma x}g(x)$, and $g(x)$ is  sufficiently differentiable.
Let $\beta \in [d,d+1)$, $d$ is an integer, then $\gamma=\beta-d \in [0,1)$.

According to  Lemma \ref{lemma2.7}, there exists
\begin{equation*}
\begin{split}
  f(x)=&\sum_{q=0}^N\frac{D_s^{(q+\gamma-1)}f(0)  }{\Gamma(q+\gamma)} x^{q+\gamma-1} e^{-\sigma x}
       +\frac{1}{\Gamma(N+\gamma)}\left[\left(t^{N+\gamma-1}e^{-\sigma t}\right) \ast D_s^{(N+\gamma)}f\right](x).
\end{split}
\end{equation*}
If $\Re(N+\gamma-1)>p$ and additionally $\Re(N-p+\alpha+\gamma)>0$, then using (\ref{4.11}) and following the proof of
Theorem 2.4 in \cite{Lubich:86}, it is easy to get
 $$I_{s,h}^\alpha f(x)-I_s^\alpha f(x)=\mathcal{O}(e^{-\sigma x} x^{m-p+\alpha+\gamma-1}h^p) ~~\mbox{uniformly for bounded } x.$$

If $m$ in (\ref{4.18}) is replaced by $l\, (>m)$ with $\Re (l-p+\alpha+\gamma-1) \geq 0$, then the following for the corresponding starting quadrature weights holds 
$$ \kappa_{n,j} =\mathcal{O}(n^{l-1-p+\alpha+\gamma-1}), $$
and by the similar arguments performed above, we can prove that 
 $$I_{s,h}^\alpha f(x)-I_s^\alpha f(x)=\mathcal{O}(h^p) ~~\mbox{uniformly for bounded } x.$$
\end{proof}

\section{Numerical Results} 

We use two numerical examples to confirm that the theoretical results given in the above sections, including the fractional substantial derivatives and integrals.  The first example mainly verifies the numerical stability and convergent order; and the second one primarily focuses on illustrating that the starting quadrature numerically works very well for keeping the high order accuracy when the performed function becomes less regular.
 And the $ l_\infty$ norm is used to measure the numerical errors.
\begin{example}\end{example}
To numerically verify the truncation error given in Theorem \ref{theorem3.7}  in a bounded domain.
We utilize the approximation (\ref{3.8}) with $p=5$ to simulate the following equation
$$D_s^{\alpha}f(x)=\frac{\Gamma(6+\alpha)}{\Gamma(6)}x^{5}e^{-\sigma x},~~x \in(0,1),~~\sigma=1/2.$$
When $\alpha <0$, the fractional operator $D_s^{\alpha}$ becomes  fractional substantial integral operator; if $\alpha \in (0,1)$ we take $f(0)=0$; and if $\alpha \in (1,2)$ let $f(0)=0$, $f(1)=e^{-\sigma}$; the exact solution of the above equation is $f(x)=e^{-\sigma x}x^{5+\alpha}$.

\begin{table}[h]\fontsize{9.5pt}{12pt}\selectfont
 \begin{center}
  \caption {The maximum errors and convergent orders for  (\ref{3.8}), when $p=5$, $\sigma=1/2$.} \vspace{5pt}
\begin{tabular*}{\linewidth}{@{\extracolsep{\fill}}*{10}{c}}                                    \hline  
$h$& $\alpha=-1/2$&  Rate       & $\alpha=1/2$  & Rate       & $\alpha=3/2$ &   Rate    \\\hline
~~~1/10&  3.7956e-005  &             & 2.0214e-004     &            & 3.7954e-003   &         \\
~~~1/20&  1.3109e-006  &  4.8557     & 6.9814e-006     & 4.8557     & 1.2933e-004   & 4.8751   \\
~~~1/40&  4.3065e-008  &  4.9279     & 2.2935e-007     & 4.9279     & 4.3193e-006   & 4.9041   \\
~~~1/80&  1.3798e-009  &  4.9639     & 7.3488e-009     & 4.9639     & 1.4014e-007   & 4.9459   \\
~~~\,~1/160& 4.3662e-011  &  4.9820     & 2.3254e-010     & 4.9820     & 4.4622e-009   & 4.9729   \\ \hline
    \end{tabular*}\label{example:1}
  \end{center}
\end{table}

Table \ref{example:1} numerically verifies Theorem \ref{theorem3.7}, and shows that the truncation errors are $\mathcal{O}(h^5)$.

\begin{example}\end{example}
To numerically confirm the result given in Sec. 4 that the starting quadrature can keep the accuracy when the performed function is not sufficiently regular, we utilize the approximation (\ref{4.12}) and  (\ref{3.8}) (both with $p=5$), respectively, to simulate the following equation
$$D_s^{\alpha}f(x)=\frac{\Gamma(6+\alpha)}{\Gamma(6)}x^{5}e^{-\sigma x}+\frac{\Gamma(1.6)}{\Gamma(1.6-\alpha)}x^{0.6-\alpha}e^{-\sigma x},~~x \in(0,1),~~\sigma=1/2.$$
When $\alpha <0$, the fractional operator $D_s^{\alpha}$ is a fractional substantial integral operator; if $\alpha \in (0,1)$ we take $f(0)=0$; the exact solution of the above equation is $f(x)=e^{-\sigma x}(x^{5+\alpha}+x^{0.6}).$
\begin{table}[h]\fontsize{9.5pt}{12pt}\selectfont
 \begin{center}
  \caption {The maximum errors and convergent orders for (\ref{4.12}) and  (\ref{3.8}), respectively, when $p=5$, $\sigma=0.5$, $\beta=1.6$, $r=4$.} \vspace{5pt}
\begin{tabular*}{\linewidth}{@{\extracolsep{\fill}}*{10}{c}}                                    \hline  
\multicolumn{9}{c}{ Numerical scheme (\ref{4.12})~~~~~~~~~~~~~~~~~~~~Numerical scheme  (\ref{3.8})}  \\ \hline
$h$  &$\alpha=-0.5$ &  Rate     & $\alpha=0.5$  & Rate       & $\alpha=-0.5$ &   Rate   & $\alpha=0.5$ &   Rate   \\\hline
1/10 &  2.8710e-05  &           & 3.7035e-04    &            & 1.4508e-02   &         & 4.3208e-01  &  \\
1/20 &  1.0424e-06  &  4.78     & 1.2791e-05    & 4.86     & 6.9407e-03   & 1.06  & 4.1336e-01  & 0.064  \\
1/40 &  3.5111e-08  &  4.90     & 4.2020e-07    & 4.93     & 3.2787e-03   & 1.08  & 3.9053e-01  & 0.082  \\
1/80 &  1.1391e-09  &  4.95     & 1.3464e-08    & 4.96     & 1.5392e-03   & 1.09  & 3.6666e-01  & 0.091  \\
1/160&  3.6272e-11  &  4.97     & 4.2604e-10    & 4.98    & 7.2029e-04   & 1.10  & 3.4318e-01  & 0.096  \\ \hline
\end{tabular*}\label{example:3}
\end{center}
\end{table}

Table \ref{example:3} numerically verifies Theorem \ref{theorem4.13}, i.e., the scheme (\ref{4.12}) can keep the high convergent order when 
the regularity requirements of the performed functions are relaxed; but the scheme  (\ref{3.8}) fails.

\section{Conclusions}
When studying the anomalous diffusion, CTRW is the most widely used model. However, if the boundary conditions and external fields are needed to consider, the equations are more convenient to include these quantities. Assuming the probability density functions (PDFs) of the waiting time and jump lengths in CTRW model are independent, from CTRW model  we can derive the corresponding fractional partial differential equations (PDEs). On the other cases, when the PDFs of the CTRW model are coupled in some way, the derived PDEs usually have a fractional substantial derivative/integral. Nowadays, it seems that there are less mathematical works for this kind of operators. This paper detailedly discusses the properties of fractional substantial calculus, and provide a series of high order discretization schemes, which does well preparation for numerically solving PDEs with fractional substantial calculus.


\section*{Acknowledgements} We thanks Eli Barkai for the fruitful discussions and letting us know the urgency to solve the PDE with fractional substantial derivative in physical community.

\end{document}